\nonstopmode
\documentclass{ps}     

\usepackage{amsmath,amssymb,amsfonts}
\usepackage[numbers]{natbib}               
\usepackage{graphicx}               
\usepackage{color}                  
\usepackage[utf8]{inputenc}
\usepackage[T1]{fontenc} 
\usepackage[absolute]{textpos} 
\usepackage{multicol}
\usepackage{amsthm}
\usepackage{array}

\newcommand{\beq} {\begin{eqnarray*}}
\newcommand{\eeq} {\end{eqnarray*}}

\newcommand{\tbf} {\textbf}

\newcommand{\cvloi}{\overset{\mathcal{L}}{\underset{N\to\infty}{\longrightarrow}}}
\providecommand{\abs}[1]{\left\lvert#1\right\rvert}
\providecommand{\norm}[1]{\left\Vert#1\right\Vert}
\def \R{\mathbb{R}}

\def \N{\mathbb{N}}

\def \E{\mathbb{E}}
\def \P{\mathbb{P}}
\def \Var{\hbox{{\rm Var}}}

\def \Cov{\hbox{{\rm Cov}}}

\newcommand{\ud}{\mathrm{d}}
\newcommand{\Corr}{\mathrm{Corr}}

\newcolumntype{M}[1]{>{\raggedright}m{#1}}\usepackage{array}

\oddsidemargin 0cm
\evensidemargin 0cm

\definecolor{orange}{rgb}{1,0.5,0}
\definecolor{vert}{rgb}{0.2,0.8,0.7}
\definecolor{bleu}{rgb}{0.4,0.4,1}

\textwidth 15.5cm
\topmargin -1cm
\parindent 0cm
\textheight 24cm
\parskip 1mm
\newtheorem{theorem}{Theorem}[section]

\newtheorem{prop}[theorem]{Proposition}

\newtheorem{lemma}[theorem]{Lemma}
\newtheorem{rem}[theorem]{Remark}

\begin{document}
\title{Asymptotic normality and efficiency of two Sobol index estimators}
\author{Alexandre Janon}
\address{Laboratoire Jean Kuntzmann, Université Joseph Fourier, INRIA/MOISE, 51 rue des Mathématiques, BP 53, 38041 Grenoble cedex 9, France}
\author{Thierry Klein}
\address{Laboratoire de Statistique et Probabilités, Institut de Mathématiques
		Université Paul Sabatier (Toulouse 3) 31062 Toulouse Cedex 9, France}
\author{Agn\`es Lagnoux}
\sameaddress{2}
\author{Ma\"elle Nodet}
\sameaddress{1}
\author{Cl\'ementine Prieur}
\sameaddress{1}
\begin{abstract}
Many mathematical models involve input parameters, which are not precisely known. Global sensitivity analysis aims to identify the parameters whose uncertainty has the largest impact on the variability of a quantity of interest (output of the model). One of the statistical tools used to quantify the influence of each input variable on the output is the Sobol sensitivity index. We consider the statistical estimation of this index from a finite sample of model outputs: we present two estimators and state a central limit theorem for each. We show that one of these estimators has an optimal asymptotic variance. We also generalize our results to the case where the true output is not observable, and is replaced by a noisy version.
\end{abstract}
\begin{resume}
De nombreux modèles mathématiques font intervenir plusieurs paramètres qui ne sont pas tous connus précisément. L'analyse de sensibilité globale se propose de sélectionner les paramètres d'entrée dont l'incertitude a le plus d'impact sur la variabilité d'une quantité d'intérêt (sortie du modèle). Un des outils statistiques pour quantifier l'influence de chacune des entrées sur la sortie est l'indice de sensibilité de Sobol. Nous considérons l'estimation statistique de cet indice à l'aide d'un nombre fini d'échantillons de sorties du modèle: nous présentons deux estimateurs de cet indice et énonçons un théorème central limite pour chacun d'eux. Nous démontrons que l'un de ces deux estimateurs est optimal en terme de variance asymptotique. Nous généralisons également nos résultats au cas o\`u la vraie sortie du modèle n'est pas observée, mais o\`u seule une version dégradée (bruitée) de la sortie est disponible.
\end{resume}
\subjclass{62G05, 62G20}
\keywords{sensitivity analysis, Sobol indices, asymptotic efficiency, asymptotic normality, confidence intervals, metamodelling, surface response methodology}
\maketitle

\section*{Introduction}
Many mathematical models encountered in applied sciences involve a large number of poorly-known parameters as inputs. It is important for the practitioner to assess the impact of this uncertainty on the model output. An aspect of this assessment is sensitivity analysis, which aims to identify the most sensitive parameters, that is, parameters having the largest influence of the output. In global stochastic sensitivity analysis (see for example \cite{saltelli-sensitivity} and references therein) the input variables are assumed to be  independent random variables.
Their probability distributions  account for the practitioner's belief about the input uncertainty. This turns the model output into a random variable, whose total variance can be split down into different partial variances (this is the so-called Hoeffding decomposition, see \cite{van2000asymptotic}). Each of these partial variances measures the uncertainty on the output induced by each input variable uncertainty. By considering the ratio of each partial variance to the total variance, we obtain a measure of importance for each input variable that is called the \emph{Sobol index} or \emph{sensitivity index} of the variable \cite{sobol1993}; the most sensitive parameters can then be identified and ranked as the parameters with the largest Sobol indices. 

Once the Sobol indices have been defined, the question of their effective computation or estimation remains open. In practice, one has to estimate (in a statistical sense) those indices using a finite sample (of size typically in the order of hundreds of thousands) of evaluations of model outputs \cite{helton2006survey}. Indeed, many Monte Carlo or quasi Monte Carlo approaches have been developed by the experimental sciences and engineering communities. This includes the FAST methods (see for example \cite{cukier1978nonlinear}, \cite{tissot2010bias} and references therein) and 
the Sobol pick-freeze (SPF) scheme  (see \cite{sobol1993,sobol2001global}). In SPF a Sobol index is viewed as the regression coefficient between the output of the model and its pick-freezed replication. This replication is obtained by holding the value of the variable of interest (frozen variable) and by sampling the other variables (picked variables). The sampled replications are then combined to produce an estimator of the Sobol index. In this paper we study very deeply this Monte Carlo method in the general framework where one or more variables can be frozen. This allows to define sensitivity indices with respect to a general random input living in a probability space (groups of variables, random vectors, random processes...). In this work, we study and compare two Sobol index estimators based on the SPF scheme; the first estimator, denoted by $S_N^X$, is well-known, the second, denoted by $T_N^X$ has been introduced in \cite{Monod2006}. For both estimators, we show convergence and give the rate of convergence; we also show that $T_N^X$ is optimal (in terms of asymptotic variance) amongst regular estimators which are functions of the pick-freezed replications -- this feature is called \emph{asymptotic efficiency} and is a generalization of the notion of minimum variance unbiased estimator (see  \cite{van2000asymptotic} chapters 8 and 25 or \cite{ibragimov1981statistical} for more details).

The SPF method requires many (typically, around one thousand times the number of input variables) evaluations of the model output. In many interesting cases, an evaluation of the model output is made by a complex computer code (for instance, a numerical partial differential equation solving algorithm) whose running time is not negligible (typically in the order of a second or a minute) for one single evaluation. When thousands of such evaluations have to be made, one generally replaces the original {\it exact} model by a faster-to-run \emph{metamodel} (also known in the literature as \emph{surrogate model} or \emph{response surface} \cite{box1987empirical}) which is an approximation of the true model. Well-known metamodels include Kriging \cite{sant:will:notz:2003}, polynomial chaos expansion \cite{sudret2008global} and reduced bases \cite{nguyen2005certified,janon2011certified}, to name a few. When a metamodel is used, the estimated Sobol indices are tainted by a twofold error: \emph{sampling error}, due to the replacement of the original, infinite population of all the possible inputs by a finite sample, and \emph{metamodel error}, due to the replacement of the original model by an approximative metamodel.  

The goal of this paper is to study the asymptotic behavior of these two errors on Sobol index estimation in the double limit where the sample size goes to infinity and the metamodel converges to the true model. Some work has been done on the non-asymptotic error quantification in Sobol index estimation in earlier papers \cite{storlie2009implementation,marrel2009calculations,janon2011confidence} by means of confidence intervals which account for both sampling and metamodel errors. In this paper, we give necessary and sufficient conditions on the rate of convergence of the metamodel to the exact model for asymptotic normality of a natural Sobol index estimator to hold. The asymptotic normality allows us to produce asymptotic confidence intervals in order to assess the quality of our estimation. We also give sufficient conditions for a metamodel-based estimator to be asymptotically efficient. Asymptotic efficiency of an other Sobol index estimator has already been considered in \cite{da2008efficient}. In this work, the authors were interested in the asymptotic efficiency for local polynomial estimates of Sobol indices. Our approach proposes an estimator 
which has a simpler form, is less computationally intensive and is more precise in practice. Moreover, we derive results also in the case where the full model is replaced by a metamodel. 

This paper is organized as follows: in the first section, we set up the notation, review the definition of Sobol indices and give two estimators 
 of interest. In the second section, we prove asymptotic normality and asymptotic efficiency when the sample of outputs comes from the true model. These two properties are generalized in the third section where metamodel error is taken into account. The fourth section gives numerical illustrations on benchmark models and metamodels.

\section{Definition and estimation of Sobol indices}

\subsection{Exact model}

The output $Y \in \R$ is a function of independent random input variables $X \in \R^{p_1}$ and $Z \in \R^{p_2}$. In other words, $Y$ and $(X,Z)$ are linked by the relation 
\begin{equation}\label{def:lien_boite_noire}
Y=f(X,Z)
\end{equation}
where $f$ is a deterministic function defined on $\mathcal P \subset \R^{p_1+p_2}$. We denote by $p=p_1+p_2$ the total number of inputs of $f$.

In the paper $X'$ will denote an independent copy of $X$. We also write $Y^X=f(X,Z')$.

We assume that $Y$ is square integrable and non deterministic ($\Var(Y) \neq 0$). We are interested in the following Sobol index: \begin{equation}\label{def:sobol}
	S^X=\frac{\Var\left(\mathbb{E}(Y|X)\right)}{\Var(Y)} \in [0,1].
\end{equation}

This index quantifies the influence of the $X$ input on the output $Y$: a value of $S^X$ that is close to $1$ indicates that $X$ is highly influential on $Y$. 

\begin{rem} All the results in this paper readily apply when $X$ is multidimensional. In this case, $S^X$ is usually called the \emph{closed sensitivity index} of $X$ (see \cite{saltelli2004practice}). 

Note that this separation between the input variables can be made without loss of generality, when one estimates Sobol indices independently. An ongoing work treats the case of joint Sobol index estimation.

\end{rem}

\subsection{Estimation of $S^X$}
The next lemma shows how to express $S^X$ using covariances. This will lead to a natural estimator which has already been considered in \cite{homma1996importance}. 

\begin{lemma}\label{lemma:cov}
Assume that the random variables $Y$ and $Y^X$ are square integrable.  Then
\[ \Var(\mathbb{E}(Y|X))=\Cov(Y,Y^X). \]
In particular
\begin{equation}\label{sobol_cov}
S^X=\frac{\Cov\left(Y,Y^X\right)}{\Var(Y)}.
\end{equation}
\end{lemma}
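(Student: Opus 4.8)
The plan is to unfold the covariance into expectations and recognize the variance of the conditional expectation via a single conditioning step on $X$. First I would introduce the function $g(x)=\mathbb{E}(f(x,Z))$, so that $\mathbb{E}(Y\mid X)=g(X)$ almost surely (this is the standard interpretation of the conditional expectation against the independent block $Z$). Observing that $(X,Z')$ has the same law as $(X,Z)$, the pick-freeze replication $Y^X=f(X,Z')$ shares the distribution of $Y$; in particular $\mathbb{E}(Y^X)=\mathbb{E}(Y)$ and $\mathbb{E}(Y^X\mid X)=g(X)$ as well. Square integrability of $Y$ and $Y^X$ guarantees that all the expectations below, including that of the product $YY^X$, are finite (by Cauchy--Schwarz), so the manipulations are legitimate.

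The key step is to compute $\mathbb{E}(YY^X\mid X)$. Since $Z$ and $Z'$ are independent copies, they remain independent conditionally on $X$ (indeed each is independent of $X$), so the conditional expectation of the product factorizes:
\[
\mathbb{E}(YY^X\mid X)=\mathbb{E}(f(X,Z)\mid X)\,\mathbb{E}(f(X,Z')\mid X)=g(X)^2.
\]
Taking expectations and using the tower property yields $\mathbb{E}(YY^X)=\mathbb{E}(g(X)^2)$. I expect this factorization to be the only substantive point: it is precisely where the independence built into the pick-freeze construction is used, and making it rigorous amounts to a Fubini argument on the product measure of $Z$ and $Z'$ conditionally on $X$.

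Finally I would assemble the covariance,
\[
\Cov(Y,Y^X)=\mathbb{E}(YY^X)-\mathbb{E}(Y)\mathbb{E}(Y^X)=\mathbb{E}(g(X)^2)-\big(\mathbb{E}(g(X))\big)^2=\Var(g(X))=\Var(\mathbb{E}(Y\mid X)),
\]
which is the claimed identity. The ``in particular'' statement \eqref{sobol_cov} then follows immediately by dividing through by $\Var(Y)\neq 0$ and invoking the definition \eqref{def:sobol} of $S^X$. There is no real obstacle beyond bookkeeping; the only point requiring care is invoking integrability before manipulating the products and justifying the conditional factorization cleanly.
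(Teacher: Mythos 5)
Your proof is correct and follows essentially the same route as the paper's: both rest on the conditional independence of $Y$ and $Y^X$ given $X$ to factorize $\mathbb{E}(YY^X\mid X)=\mathbb{E}(Y\mid X)\,\mathbb{E}(Y^X\mid X)$, combined with the equality in distribution of $Y$ and $Y^X$ and the tower property. Your explicit introduction of $g(x)=\mathbb{E}(f(x,Z))$ and the Fubini remark merely spell out details the paper leaves implicit.
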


\begin{rem}
Using a classical regression result, we see that 
\begin{align}
S^X&=\underset{a \in \R}{\mathrm{argmin}} \left\{\mathbb{E}\left((Y^X-\mathbb{E}(Y^X))-a(Y-\mathbb{E}(Y))\right)^2\right\}.
\end{align}
\end{rem}

\tbf{A first estimator. } In view of Lemma \ref{lemma:cov}, we are now able to define a first natural estimator of $S^{X}$ (all sums are taken for $i$ from $1$ to $N$): 
\begin{equation}
\label{e:sxn}
S^X_N = \frac{ \frac{1}{N} \sum  Y_i   Y_i^X - \left(\frac{1}{N} \sum Y_i\right) \left(\frac{1}{N}\sum  Y_i^X\right) }{ \frac{1}{N}\sum  Y_i^2 - \left( \frac{1}{N} \sum  Y_i \right)^2 }, 
\end{equation}
where, for $i=1,\ldots,N$:
\[ Y_i=f(X_i,Z_i), \;\;\; Y_i^X=f(X_i,Z_i'), \]
and $\{ (X_i,Z_i) \}_{i=1,\ldots,N}$ and $\{ (X_i,Z_i') \}_{i=1,\ldots,N}$ are two independent and identically distributed (i.i.d.) samples of the distribution of $(X,Z)$, with $\{Z_i\}_i$ independent of $\{Z_i'\}_i$.

This estimator has been considered in \cite{homma1996importance}, where it has been shown to be a practically efficient estimator.\\

\tbf{A second estimator. } We can take into account the observation of $\{ Y_i^X \}_{1 \leq i \leq N}$ to make an estimation of $\E(Y)$ and $\Var(Y)$ which is expected to perform better than any other based on $\{ Y_i \}_{1 \leq i \leq N}$ only. We propose the following estimator:
\begin{equation}
\label{e:txn}
T^X_N = \frac{ \frac{1}{N} \sum  Y_i   Y_i^X - \left(\frac{1}{N} \sum \Big[\frac{Y_i+Y_i^X}{2}\Big]\right)^2 }{ \frac{1}{N}\sum  \Big[\frac{Y_i^2+(Y_i^X)^2}{2}\Big] - \left( \frac{1}{N} \sum  \Big[\frac{Y_i+Y_i^X}{2}\Big] \right)^2 }.
\end{equation}

This estimator has been introduced in \cite{Monod2006}. We will clarify what we mean when saying that $T_N^X$ performs better than $S_N^X$ in Proposition \ref{prop:varless}, Section \ref{ss:effic} and Subsection \ref{ss:res1}.

\begin{rem}
Note that the empirical variances in $S^X_N$ and $T^X_N$ can be rewritten as:
\begin{align}
S^X_N &= \frac{ \sum (Y_i - \overline Y)(Y_i^X - \overline {Y^X}) }{ \sum (Y_i - \overline Y)^2 } \\
T^X_N &= \frac{ \sum ( Y_i - \overline {Y_2} ) ( Y_i^X - \overline{Y_2} ) }{ \sum \left( \frac{Y_i+Y_i^X}{2} - \overline{Y_2} \right)^2 }
\end{align}
where:
\[
\overline Y = \frac1N \sum Y_i, \; \overline{Y^X} = \frac1N \sum Y_i^X, \; \overline{Y_2} = \frac{\overline Y+\overline{Y^X}}{2}.
\]
The use of these formulae enables greater numerical stability (ie., less error due to round-offs). The Kahan compensated summation algorithm \cite{kahan1965pracniques} may also be used on these sums. However, we will use definitions \eqref{e:sxn} and \eqref{e:txn} for the mathematical analysis of $S^X_N$ and $T^X_N$. This analysis is of course independent of the way the estimators are numerically computed in practice.

\end{rem}

\section{Asymptotic properties: exact model}

\subsection{Consistency and asymptotic normality}\label{ssec:cons}
Throughout all the paper, we denote by $\mathcal N_k(\mu,\Sigma)$ the $k$-dimensional Gaussian distribution with mean $\mu$ and covariance matrix $\Sigma$, and, given any sequence of random variables $\{ R_n \}_{n\in\N}$, we note
\[ \overline R_N = \frac{1}{N} \sum_{n=1}^N R_n. \]
\begin{prop}[Consistency]\label{prop:cons}
We have:
\begin{equation}\label{cv_a}
S^X_N\overset{a.s. }{\underset{N\to\infty}{\longrightarrow}} S^X
\end{equation}
\begin{equation}\label{cv_a_2}
T^X_N\overset{a.s. }{\underset{N\to\infty}{\longrightarrow}} S^X.
\end{equation}
\end{prop}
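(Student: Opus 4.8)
The plan is to establish both almost sure convergences by recognizing that $S_N^X$ and $T_N^X$ are each continuous functions (away from a null set where the denominator vanishes) of empirical means of i.i.d.\ random variables, and then to invoke the strong law of large numbers (SLLN) together with the continuous mapping theorem. First I would note that the pairs $\{(Y_i, Y_i^X)\}_{i=1,\ldots,N}$ are i.i.d.\ copies of $(Y, Y^X)$, since the underlying $\{(X_i, Z_i)\}$ and $\{(X_i, Z_i')\}$ are i.i.d.\ with the prescribed independence structure. By the square-integrability assumption on $Y$ (and hence on $Y^X$, which has the same distribution as $Y$), all the relevant second moments $\E(Y^2)$, $\E((Y^X)^2)$, $\E(YY^X)$, $\E(Y)$, $\E(Y^X)$ are finite, so the SLLN applies to each of the empirical averages appearing in \eqref{e:sxn} and \eqref{e:txn}.

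For $S_N^X$, I would apply the SLLN termwise: $\overline{Y}\to\E(Y)$, $\overline{Y^X}\to\E(Y^X)=\E(Y)$, $\frac1N\sum Y_iY_i^X\to\E(YY^X)$, and $\frac1N\sum Y_i^2\to\E(Y^2)$, all almost surely. Hence the numerator of \eqref{e:sxn} converges a.s.\ to $\E(YY^X)-\E(Y)\E(Y^X)=\Cov(Y,Y^X)$ and the denominator to $\E(Y^2)-(\E(Y))^2=\Var(Y)$. Since $\Var(Y)\neq 0$ by assumption, the continuous mapping theorem (the quotient map is continuous at points with nonzero denominator) yields $S_N^X\to \Cov(Y,Y^X)/\Var(Y)=S^X$ a.s., where the last equality is exactly the content of Lemma~\ref{lemma:cov}.

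For $T_N^X$ the argument is structurally identical, but one must check that the symmetrized empirical quantities converge to the right limits. The numerator of \eqref{e:txn} converges a.s.\ to $\E(YY^X)-\left(\frac{\E(Y)+\E(Y^X)}{2}\right)^2=\E(YY^X)-(\E(Y))^2=\Cov(Y,Y^X)$, using $\E(Y^X)=\E(Y)$. The denominator converges a.s.\ to $\frac{\E(Y^2)+\E((Y^X)^2)}{2}-(\E(Y))^2$; since $Y^X$ and $Y$ are identically distributed, $\E((Y^X)^2)=\E(Y^2)$, so this limit is again $\E(Y^2)-(\E(Y))^2=\Var(Y)\neq 0$. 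A second application of the continuous mapping theorem gives $T_N^X\to\Cov(Y,Y^X)/\Var(Y)=S^X$ a.s.

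The only genuinely substantive point — rather than an obstacle — is the observation that $Y$ and $Y^X=f(X,Z')$ share the same distribution, which is what forces $\E(Y^X)=\E(Y)$ and $\E((Y^X)^2)=\E(Y^2)$ and thereby makes the two different normalizations in $S_N^X$ and $T_N^X$ collapse to the same deterministic limit $S^X$. Everything else is a routine combination of the SLLN and continuity of the rational map, so I would expect the proof to be short; the main care needed is simply to verify integrability and to record that the limiting denominator is nonzero so that the continuous mapping theorem applies.
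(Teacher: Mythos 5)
Your proof is correct and follows exactly the same route as the paper, whose proof is a one-line invocation of the strong law of large numbers together with the identities $\E(Y)=\E(Y^X)$ and $\Var(Y)=\Var(Y^X)$; you have simply written out the termwise SLLN applications, the continuous-mapping step at the nonzero denominator $\Var(Y)$, and the identification of the limit via Lemma~\ref{lemma:cov} in full detail.
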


\begin{proof}
	The result is a straightforward application of the strong law of large numbers and that $\E(Y)=\E(Y^X)$ and $\Var(Y)=\Var(Y^X)$.
\end{proof}

\begin{prop}[Asymptotic normality] \label{prop:an1} Assume that $\E(Y^4)<\infty$. Then\\
		\begin{equation}\label{cv_n_a}
\sqrt{N} \left( S^X_N - S^X \right)
\overset{\mathcal{L}}{\underset{N\to\infty}{\rightarrow}}
\mathcal{N}_1\left(0, \sigma_S^2 \right)  
\end{equation}
and
 \begin{equation}\label{cv_n_a_2}
\sqrt{N} \left( T^X_N - S^X \right)
\overset{\mathcal{L}}{\underset{N\to\infty}{\rightarrow}}
\mathcal{N}_1\left(0, \sigma_T^2\right)
\end{equation}
where
\[
\sigma_S^2=\frac{ \Var\left( ( Y-\E(Y) ) \left[ ( Y^X - \E(Y) ) - S^X ( Y - \E(Y) ) \right] \right)}{\left( \Var(Y) \right)^2},
\]
\[
\sigma_T^2=	\frac{\Var\left( (Y-\E(Y))(Y^X-\E(Y)) - S^X/2 \left( (Y-\E(Y))^2+(Y^X-\E(Y))^2 \right) \right)}{(\Var(Y))^2}. 
\]
\end{prop}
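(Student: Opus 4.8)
The plan is to write each estimator as a smooth function of a vector of empirical means of i.i.d.\ square-integrable random vectors, and then combine the multivariate central limit theorem with the Delta method. For $S_N^X$ I would set $U_i = \big(Y_i Y_i^X,\, Y_i,\, Y_i^X,\, Y_i^2\big)$ and observe that $S_N^X = \phi(\overline U_N)$, where $\phi(a,b,c,d) = (a-bc)/(d-b^2)$ is $C^\infty$ on the open set $\{d-b^2>0\}$. Similarly, for $T_N^X$ I would set $V_i = \big(Y_i Y_i^X,\, \tfrac{Y_i+Y_i^X}{2},\, \tfrac{Y_i^2+(Y_i^X)^2}{2}\big)$ and write $T_N^X = \psi(\overline V_N)$ with $\psi(a,b,c) = (a-b^2)/(c-b^2)$. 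Since $U_i$ and $V_i$ involve only products and squares of $Y_i$ and $Y_i^X$, the assumption $\E(Y^4)<\infty$ (and $\E((Y^X)^4)=\E(Y^4)<\infty$, as $Y$ and $Y^X$ share the same law) guarantees via Cauchy--Schwarz that $U_i$ and $V_i$ are square-integrable. Hence the multivariate CLT yields $\sqrt N(\overline U_N - \mu) \to \mathcal N(0,\Gamma)$ and $\sqrt N(\overline V_N - \nu) \to \mathcal N(0,\Delta)$, with $\mu=\E(U_1)$, $\nu=\E(V_1)$, and $\Gamma,\Delta$ the associated covariance matrices.

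Next I would check that $\phi(\mu)=\psi(\nu)=S^X$. For $\phi$ this is immediate: the numerator evaluated at $\mu$ is $\E(YY^X)-\E(Y)\E(Y^X)=\Cov(Y,Y^X)$ and the denominator is $\E(Y^2)-\E(Y)^2=\Var(Y)$, so $\phi(\mu)=S^X$ by \eqref{sobol_cov}. For $\psi$ one must additionally use the symmetry relations $\E(Y)=\E(Y^X)$ and $\Var(Y)=\Var(Y^X)$: these give $\E\big(\tfrac{Y+Y^X}{2}\big)=\E(Y)$ and $\E\big(\tfrac{Y^2+(Y^X)^2}{2}\big)=\E(Y^2)$, so the numerator and denominator again reduce to $\Cov(Y,Y^X)$ and $\Var(Y)$, whence $\psi(\nu)=S^X$ (also using Lemma \ref{lemma:cov}). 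Because both denominators converge to $\Var(Y)\neq 0$, the maps $\phi$ and $\psi$ are smooth near $\mu$ and $\nu$ respectively, so the Delta method applies and gives $\sqrt N(S_N^X - S^X)\to \mathcal N\big(0,\,\nabla\phi(\mu)^\top\Gamma\,\nabla\phi(\mu)\big)$, and likewise for $T_N^X$ with $\psi,\nu,\Delta$.

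Finally, since $\nabla\phi(\mu)^\top\Gamma\,\nabla\phi(\mu)=\Var\big(\nabla\phi(\mu)\cdot U_1\big)$ is simply the variance of a single linearized summand (and similarly for $\psi$), it remains to compute the gradients at $\mu$ and $\nu$ and identify the resulting linear combinations. Differentiating $\phi$ and substituting $\mu$, and then $\psi$ and substituting $\nu$, I expect the linearized summands to collapse (after using the same symmetry relations) to
\[
\frac{1}{\Var(Y)}\,(Y-\E(Y))\big[(Y^X-\E(Y))-S^X(Y-\E(Y))\big]
\]
for $S_N^X$, and
\[
\frac{1}{\Var(Y)}\Big[(Y-\E(Y))(Y^X-\E(Y))-\tfrac{S^X}{2}\big((Y-\E(Y))^2+(Y^X-\E(Y))^2\big)\Big]
\]
for $T_N^X$; taking variances then produces exactly $\sigma_S^2$ and $\sigma_T^2$. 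The hard part will be precisely this gradient-and-simplification step: the raw derivatives of the two ratios generate several terms, and one must carefully exploit $\E(Y)=\E(Y^X)$, $\Var(Y)=\Var(Y^X)$, and $\Cov(Y,Y^X)=\Var(\E(Y|X))$ to see that they coalesce into these compact influence functions, after which dividing by $(\Var(Y))^2$ recovers the stated asymptotic variances.
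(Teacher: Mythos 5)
Your proposal is correct and follows essentially the same route as the paper: write each estimator as a smooth function of an empirical mean vector, apply the multivariate CLT, then the Delta method, and read off the asymptotic variance as the variance of the linearized summand. The one place you diverge is that you work with the \emph{uncentred} vectors $U_i=(Y_iY_i^X,\,Y_i,\,Y_i^X,\,Y_i^2)$ and $V_i$, and you defer as ``the hard part'' the simplification of the raw gradients into the compact influence functions. The paper removes this difficulty with a one-line observation: $S_N^X$ and $T_N^X$ are invariant when every $Y_i$ and $Y_i^X$ is translated by a common constant, so one may assume without loss of generality that all variables have been recentred by $-\E(Y)$. With centred coordinates the mean vector becomes $\mu=\left(\Cov(Y,Y^X),\,0,\,0,\,\Var(Y)\right)^T$, the second and third components of the gradient vanish, and one gets directly $\nabla\Psi_S(\mu)=\left(1/\Var(Y),\,0,\,0,\,-S^X/\Var(Y)\right)^T$; the linearized summand is then visibly $\frac{1}{\Var(Y)}\left[(Y-\E(Y))(Y^X-\E(Y))-S^X(Y-\E(Y))^2\right]$, whose variance is $\sigma_S^2$, and the analogous reduction handles $T_N^X$. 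Your uncentred computation does close: expanding $(Y-\E(Y))(Y^X-\E(Y))$ and $(Y-\E(Y))^2$ shows that your four-term linear combination coincides exactly with this influence function, so nothing in your plan fails --- but to make it a complete proof you must either carry out that expansion explicitly or, better, insert the translation-invariance reduction at the start, which turns your ``hard part'' into a triviality.
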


\begin{prop} \label{prop:varless}
The asymptotic variance of $T_N^X$ is always less than or equal to the asymptotic variance of $S_N^X$, with equality if and only if $S^X=0$ or $S^X=1$.
\end{prop}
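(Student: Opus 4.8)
The plan is to compute the difference of the two asymptotic variances explicitly and show it is a nonnegative quantity that vanishes only in the extreme cases. Write $U := Y - \E(Y)$ and $V := Y^X - \E(Y)$, set $v := \Var(Y) = \E(U^2) = \E(V^2)$ and $s := S^X$, so that by Lemma \ref{lemma:cov} one has $\E(UV) = \Cov(Y,Y^X) = s\,v$. The structural fact I would exploit is that, by the pick-freeze construction, $Y = f(X,Z)$ and $Y^X = f(X,Z')$ are exchangeable (indeed conditionally i.i.d.\ given $X$); hence $(U,V)$ and $(V,U)$ have the same law, and every relevant quantity is invariant under the swap $U \leftrightarrow V$, e.g.\ $\E(U^2) = \E(V^2)$ and $\Cov(UV,U^2) = \Cov(UV,V^2)$.

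With this notation the numerators appearing in Proposition \ref{prop:an1} read $\sigma_S^2 v^2 = \Var(P)$ and $\sigma_T^2 v^2 = \Var\big(\tfrac{P+Q}{2}\big)$, where $P := UV - s\,U^2$ and $Q := UV - s\,V^2$ is its image under the swap. Exchangeability gives $\Var(P) = \Var(Q)$, so I would apply the elementary identity $\Var\big(\tfrac{P+Q}{2}\big) = \tfrac{1}{2}\big(\Var(P) + \Var(Q)\big) - \tfrac14 \Var(P-Q)$, which, using $P - Q = s(V^2 - U^2)$, yields at once
\[
\sigma_S^2 - \sigma_T^2 = \frac{1}{4v^2}\,\Var(P-Q) = \frac{s^2}{4v^2}\,\Var\big(U^2 - V^2\big).
\]
This single formula already proves the inequality, the right-hand side being manifestly nonnegative.

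It remains to characterize equality. The displayed expression vanishes exactly when $s = 0$ or $\Var(U^2 - V^2) = 0$; since $\E(U^2 - V^2) = 0$ by exchangeability, the latter means $U^2 = V^2$ almost surely. The implication $s = 1 \Rightarrow$ equality is easy: $S^X = 1$ forces $\E\big(\Var(Y\mid X)\big) = 0$, so $Y$ is almost surely a function of $X$ alone, whence $Y^X = Y$, $U = V$ and $P = Q$. The delicate converse — that $U^2 = V^2$ a.s.\ together with $s \neq 0$ can occur only when $s = 1$ — is where I expect the main difficulty to lie, and it is precisely the step that forces one to use the conditional i.i.d.\ structure rather than mere exchangeability. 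The natural attack is to condition on $X$: given $X$, the variables $U$ and $V$ are i.i.d., so $|U| = |V|$ a.s.\ forces $|U|$ to be a.s.\ constant conditionally on $X$, i.e.\ the conditional law of $Y$ given $X$ is carried by two symmetric points; one must then show, using $\E(UV) = s\,v \neq 0$, that this degeneracy propagates to $\Var(Y\mid X) = 0$ and hence $s = 1$. Controlling this last implication carefully — in particular ruling out the intermediate two-point configurations in which $|U|$ is conditionally constant yet $Y$ still genuinely depends on $Z$ — is the crux of the argument and the part I would scrutinize most.
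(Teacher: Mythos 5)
Your variance computation is correct and is, in substance, the paper's own: after the same centering, the paper expands the variances directly and obtains $(\Var Y)^2(\sigma_S^2-\sigma_T^2)=\frac{(S^X)^2}{2}\bigl(\Var(Y^2)-\Cov(Y^2,(Y^X)^2)\bigr)$, which coincides with your $\frac{(S^X)^2}{4}\Var(U^2-V^2)$ by exchangeability; your parallelogram-identity route is slightly cleaner, since nonnegativity becomes manifest and no Cauchy--Schwarz argument is needed. The inequality and the implications ``$S^X=0\Rightarrow$ equality'' and ``$S^X=1\Rightarrow$ equality'' are correctly handled. But there is a genuine gap, and it is exactly where you said it would be: you never prove that equality together with $S^X\neq0$ forces $S^X=1$. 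You reduce equality to ``$S^X=0$ or $U^2=V^2$ a.s.'' and sketch a conditioning-on-$X$ attack, but the final step --- ruling out the conditional two-point configurations --- is left open, so the ``only if'' half of the statement is simply not established in your proposal.

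You should also know that this step cannot be carried out: those two-point configurations are genuine counterexamples, i.e.\ the equality criterion claimed in the Proposition is false as stated. Take $X$ uniform on $\{-1,1\}$, $Z$ uniform on $[0,1]$, and $Y=f(X,Z)\in\{-1,1\}$ with $\P(Y=1\mid X=1)=3/4$ and $\P(Y=1\mid X=-1)=1/4$. Then $\E(Y)=0$ and $Y^2=(Y^X)^2=1$ a.s., so $U^2=V^2$ a.s.\ and both asymptotic variances equal $\Var(YY^X)$, yet $S^X=\Var(\E(Y\mid X))/\Var(Y)=1/4\notin\{0,1\}$. Your conditional-i.i.d.\ analysis in fact gives the correct characterization: equality holds if and only if $S^X=0$ or $\abs{Y-\E(Y)}$ is a.s.\ a function of $X$ alone, a set of models strictly larger than $\{S^X=0\}\cup\{S^X=1\}$. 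For what it is worth, the paper's own proof breaks at the same point: it invokes the equality case of Cauchy--Schwarz --- which is vacuous when $\Var(Y^2)=0$, precisely the situation in the example above --- and then deduces ``$Y=Y^X$ a.s.''\ from $Y^2=(Y^X)^2$ a.s., which only yields $\abs{Y}=\abs{Y^X}$. So your instinct to scrutinize that implication was right: it is not merely the crux, it is where the claimed characterization fails.
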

To prove this Proposition, we need the following immediate Lemma:
\begin{lemma} \label{lemm:exch}
	$Y$ and $Y^X$ are exchangeable random variables, ie. $(Y, Y^X) \buildrel \mathcal{L} \over = (Y^X, Y)$.
\end{lemma}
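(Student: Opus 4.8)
The plan is to exploit the symmetry that is built directly into the definitions of $Y$ and $Y^X$. Recall that $Y=f(X,Z)$ and $Y^X=f(X,Z')$, where, by construction of the pick-freeze scheme, $Z$ and $Z'$ are independent and identically distributed (each having the marginal law of $Z$) and each is independent of $X$. The whole argument rests on the observation that this makes the roles of $Z$ and $Z'$ perfectly interchangeable.

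First I would note that, since $Z$ and $Z'$ are i.i.d.\ and jointly independent of $X$, the triple $(X,Z,Z')$ has exactly the same law as the triple $(X,Z',Z)$ obtained by swapping the two copies. Indeed, the joint distribution of $(X,Z,Z')$ factorizes as the product of the law of $X$, the law of $Z$ and the law of $Z'$, and this product is manifestly invariant under interchanging the last two coordinates.

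Next I would introduce the measurable map $g\colon \R^{p_1}\times\R^{p_2}\times\R^{p_2}\to\R^2$ defined by $g(x,z,z')=\big(f(x,z),f(x,z')\big)$. Because the pushforward of a distribution by a fixed measurable map depends only on that distribution, applying $g$ to the two equally-distributed triples above gives
\[
\big(f(X,Z),f(X,Z')\big) \buildrel \mathcal{L} \over = \big(f(X,Z'),f(X,Z)\big),
\]
which is exactly $(Y,Y^X)\buildrel \mathcal{L} \over =(Y^X,Y)$, as claimed.

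There is essentially no real obstacle here, which is why the lemma is qualified as immediate; the only point deserving a moment's care is checking that $Z$ and $Z'$ genuinely enter the construction symmetrically, namely that they share the same marginal law and that $X$ is independent of both. This is precisely what the standing independence assumptions on the inputs guarantee, and once it is made explicit the conclusion follows at once.
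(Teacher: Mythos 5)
Your proof is correct, but it takes a different (and arguably cleaner) route than the paper's own argument. The paper proves the lemma by computing the joint cdf: it conditions on the shared (frozen) input, uses the fact that the two outputs are independent conditionally on that input, and then swaps the two arguments of the cdf using the equality in law of the two independent copies, concluding that $\P(Y\leq a, Y^X\leq b)=\P(Y\leq b, Y^X\leq a)$ for all $(a,b)$. In other words, the paper's mechanism is ``conditionally i.i.d.\ given the frozen variable, hence exchangeable,'' implemented through a disintegration/Fubini computation. You avoid conditioning altogether: you observe that mutual independence of $X$, $Z$, $Z'$ together with $Z \buildrel \mathcal{L} \over = Z'$ makes the law of the input triple invariant under swapping the last two coordinates, and then push this symmetry forward through the fixed measurable map $g(x,z,z')=\left(f(x,z),f(x,z')\right)$. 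Both proofs rest on the same underlying symmetry of the pick-freeze construction; yours trades the conditional-independence computation for the functoriality of pushforwards, which requires no regular conditional probabilities and is closer to a one-line argument. One incidental benefit of your write-up is that it works directly with the paper's stated convention $Y^X=f(X,Z')$ (frozen $X$, resampled $Z$), whereas the paper's proof is written with the roles of the two groups of inputs interchanged ($Y^X=f(X',Z)$); the argument is the same up to relabelling, but your version matches the definitions as given.
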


\subsection{Asymptotic efficiency} \label{ss:effic}

In this section we study the asymptotic efficiency of $S_N^X$ and $T_N^X$. This notion (see \cite{van2000asymptotic}, Section 25 for its definition) extends the notion of Cramér-Rao bound to the semiparametric setting and enables to define a criteria of optimality for estimators, called asymptotic efficiency.

Let $\mathcal{P}$ be the set of all cumulative distribution functions (cdf) of exchangeable random vectors in $L^2(\R^2)$. It is clear that the cdf $Q$ of a random vector of $L^2(\R^2)$ is in $\mathcal{P}$ if and only if $Q$ is symmetric: 
\[ Q(a,b)=Q(b,a) \;\;\; \forall (a,b)\in\R^2. \]

Let $P$ be the cdf of $(Y,Y^X)$. We have $P \in \mathcal P$ thanks to Lemma \ref{lemm:exch}.

\begin{prop}[Asymptotic efficiency]\label{prop:ae}
$\{ T^X_N \}_N$ is asymptotically efficient for estimating $S^X$ for $P\in\mathcal{P}$.
\end{prop}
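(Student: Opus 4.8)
The plan is to show that the estimator $T_N^X$ achieves the semiparametric efficiency bound for estimating the functional
\[
S^X = \frac{\Cov(Y,Y^X)}{\Var(Y)}
\]
over the model $\mathcal P$ of exchangeable (symmetric) distributions $P$ on $\R^2$. The strategy follows the standard convolution/efficiency machinery of \cite{van2000asymptotic}, Section 25: first identify the tangent space of $\mathcal P$ at $P$, then compute the efficient influence function of the functional $S^X$ by projecting a naive influence function onto that tangent space, and finally verify that $T_N^X$ is asymptotically linear with influence function equal to this efficient influence function. By the general theory, an asymptotically linear estimator whose influence function lies in the tangent space and equals the canonical gradient is automatically efficient, so matching influence functions is exactly what we must do.

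First I would describe the model as a nonparametric family and compute its tangent set. Since $\mathcal P$ consists of all square-integrable laws satisfying the single linear constraint of symmetry $Q(a,b)=Q(b,a)$, the tangent space at $P$ is the subspace of $L^2_0(P)$ (mean-zero, square-integrable score functions) consisting of those $g(u,v)$ that are \emph{symmetric}, $g(u,v)=g(v,u)$; the constraint removes the antisymmetric part. This is the key structural fact that distinguishes $\mathcal P$ from the fully nonparametric model, and it is precisely the extra information ``$Y$ and $Y^X$ are exchangeable'' that $T_N^X$ exploits but $S_N^X$ ignores.

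Next I would compute the efficient influence function. Writing $S^X = \phi(P)$ as a smooth (differentiable) functional of the moments $\E(Y Y^X)$, $\E(Y)$, and $\E(Y^2)$ (with $\E(Y)=\E(Y^X)$ and $\E(Y^2)=\E((Y^X)^2)$ holding under exchangeability), I would first write down the influence function of $\phi$ in the \emph{unconstrained} nonparametric model by the delta method, then project it onto the symmetric tangent space by symmetrizing in the two coordinates $u \leftrightarrow v$. Symmetrizing the naive gradient is exactly the operation that replaces $\overline Y$ and the variance-of-$Y$ denominator by their exchangeable versions $\overline{Y_2}$ and the symmetrized denominator appearing in \eqref{e:txn}. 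This yields the candidate efficient influence function
\[
\tilde\psi(Y,Y^X) = \frac{(Y-\E(Y))(Y^X-\E(Y)) - \tfrac{S^X}{2}\big((Y-\E(Y))^2 + (Y^X-\E(Y))^2\big)}{\Var(Y)},
\]
whose variance is exactly $\sigma_T^2$ from Proposition \ref{prop:an1}.

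Finally I would verify that $T_N^X$ is regular and asymptotically linear with influence function $\tilde\psi$: a Taylor/delta-method expansion of \eqref{e:txn} around the true moments, using the central limit theorem for the vector of empirical moments and the consistency established in Proposition \ref{prop:cons}, shows that $\sqrt N(T_N^X - S^X)$ has the asymptotically linear representation $\frac{1}{\sqrt N}\sum \tilde\psi(Y_i,Y_i^X) + o_P(1)$ with $\tilde\psi$ symmetric and hence lying in the tangent space. Since the influence function coincides with the canonical gradient, the convolution theorem gives efficiency. The main obstacle I anticipate is the correct and rigorous identification of the tangent space and the verification that the symmetrized gradient is genuinely the \emph{canonical} gradient (i.e.\ that it lies in the tangent space and represents the derivative of $\phi$ along every smooth symmetric submodel); once that projection is pinned down, the remaining delta-method expansion is routine and already essentially contained in the proof of Proposition \ref{prop:an1}.
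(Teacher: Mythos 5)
Your proposal is correct, and it reaches the result by a genuinely different route through the same general theory. The paper never writes down the efficient influence function of $S^X$ itself: it first proves Lemma \ref{lemm:eff}, showing that symmetrized empirical moments are asymptotically efficient for the linear functionals $\E(\Phi_1(Y))$ and $\E(\Phi_2(Y,Y^X))$; it then applies this componentwise to the vector $U_N$ of the three symmetrized moments entering $T_N^X$, upgrades componentwise to joint efficiency via van der Vaart's Theorem 25.50 (efficiency in product spaces), and finally invokes Theorem 25.47 (efficiency is preserved by differentiable maps) for $\Psi(x,y,z)=(x-y^2)/(z-y^2)$, so that $T_N^X=\Psi(U_N)$ inherits efficiency. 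You instead work directly with the scalar functional: you identify the same tangent space (the symmetric part of $L^2_0(P)$), obtain the canonical gradient of $S^X$ by symmetrizing the naive nonparametric gradient --- and your $\widetilde\psi$ is indeed symmetric, mean-zero, represents the derivative along symmetric paths, and has variance exactly $\sigma_T^2$ --- and then match it against the asymptotically linear expansion $\sqrt N(T_N^X-S^X)=\frac{1}{\sqrt N}\sum\widetilde\psi(Y_i,Y_i^X)+o_P(1)$ coming from the delta method (this expansion is implicit in the proof of Proposition \ref{prop:an1}: with centered variables the gradient there is $g=(1/\Var(Y),\,0,\,-S^X/(2\Var(Y)))^T$ and $g^T(W_i-\E W_i)=\widetilde\psi(Y_i,Y_i^X)$), concluding by the characterization of efficient estimators as those asymptotically linear in the efficient influence function (van der Vaart, Lemma 25.23). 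The paper's modular route buys reusability: the efficiency of the moment vector $U_N$ and Lemma \ref{lemm:eff} are reused verbatim in the metamodel result (Proposition \ref{prop:aem}), and all projection computations are confined to linear functionals, where they are easiest. Your route buys transparency: it exhibits the semiparametric bound explicitly and shows that the asymptotic variance $\sigma_T^2$ of Proposition \ref{prop:an1} \emph{is} that bound, tying the optimality claim concretely to Proposition \ref{prop:varless}. The one step you should flesh out is the assertion that symmetrization of the naive gradient yields the canonical gradient: this holds because symmetrization is precisely the $L^2(P)$-orthogonal projection onto the symmetric subspace when $P$ is exchangeable, and because the naive gradient, being a gradient of an extension of the functional to the unconstrained model, remains a gradient of its restriction to $\mathcal{P}$; both facts are elementary but they are what carries the argument.
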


We will use the following Lemma, which is also of interest in its own right:
\begin{lemma}[Asymptotic efficiency in $\mathcal{P}$]\label{lemm:eff}
\begin{enumerate} 
 \item Let $\Phi_1: \R \rightarrow \R$ be a function in $L^2(P)$. The sequence of estimators $\left\{\Phi_N^1\right\}_N$ given by:
 \[ \Phi_N^1 = \frac{1}{N} \sum \frac{ \Phi_1(Y_i) + \Phi_1(Y_i^X) }{2} \]
 is asymptotically efficient for estimating $\E (\Phi_1(Y)) $ for $P\in\mathcal P$.

 \item Let $\Phi_2: \R^2 \rightarrow \R$ be a symmetric function in $L^2(P)$. The sequence $\left\{\Phi_N^2\right\}_N$ given by:
 \[ \Phi_N^2 = \frac{1}{N} \sum \Phi_2 \left( Y_i, Y_i^X \right) \]
 is asymptotically efficient for estimating $\E(\Phi_2(Y,Y^X)) $ for $P\in\mathcal P$.

\end{enumerate}
\end{lemma}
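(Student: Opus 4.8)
The plan is to work within the semiparametric framework of \cite{van2000asymptotic}, Section~25, where we observe an i.i.d.\ sample $(Y_i,Y_i^X)_{1\le i\le N}$ drawn from $P\in\mathcal P$ and wish to estimate a real-valued functional of $P$. The first step is to identify the tangent set of the model $\mathcal P$ at $P$. Since $\mathcal P$ consists precisely of the symmetric laws, any admissible perturbation of $P$ must preserve symmetry; I would therefore exhibit, for each bounded symmetric $g\in L^2(P)$ with $\E_P(g)=0$, the one-dimensional submodel $\ud P_t=(1+t g)\,\ud P$, check that each $P_t$ lies in $\mathcal P$ for $t$ small and that $t\mapsto P_t$ is differentiable in quadratic mean (QMD) with score $g$. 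Taking the $L^2(P)$-closure of the scores so obtained, I expect the tangent space to be
\[ \dot{\mathcal P}_P=\left\{ g\in L^2(P)\;:\; g(a,b)=g(b,a)\ \forall (a,b)\in\R^2,\ \E_P(g)=0 \right\}, \]
the closed subspace of symmetric, centered, square-integrable functions.

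The second step is to compute, for each functional, a gradient and then its orthogonal projection onto $\dot{\mathcal P}_P$, which yields the efficient influence function. For $\psi_1(P)=\E_P(\Phi_1(Y))=\int \Phi_1(y_1)\,\ud P(y_1,y_2)$, differentiating $\psi_1(P_t)$ at $t=0$ gives $\E_P\big( (\Phi_1(Y)-\psi_1(P))\,g \big)$, so that $h(y_1,y_2)=\Phi_1(y_1)-\psi_1(P)$ is a gradient. This $h$ is not symmetric, so I would project it onto $\dot{\mathcal P}_P$. Because $P$ is symmetric, the symmetrization $h\mapsto \tfrac12\big(h(y_1,y_2)+h(y_2,y_1)\big)$ is self-adjoint and idempotent on $L^2(P)$, hence it \emph{is} the orthogonal projection onto the symmetric subspace. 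The efficient influence function for $\psi_1$ is therefore
\[ \widetilde\psi_1(y_1,y_2)=\frac{\Phi_1(y_1)+\Phi_1(y_2)}{2}-\psi_1(P). \]
For $\psi_2(P)=\E_P(\Phi_2(Y,Y^X))$ with $\Phi_2$ symmetric, the gradient $\Phi_2(y_1,y_2)-\psi_2(P)$ is already symmetric and centered, hence lies in $\dot{\mathcal P}_P$ and equals its own projection; thus $\widetilde\psi_2=\Phi_2-\psi_2(P)$ is the efficient influence function.

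The final step is to recognize that each proposed estimator is asymptotically linear with influence function equal to the corresponding efficient influence function. Indeed $\Phi_N^1-\psi_1(P)=\frac1N\sum \widetilde\psi_1(Y_i,Y_i^X)$ and $\Phi_N^2-\psi_2(P)=\frac1N\sum \widetilde\psi_2(Y_i,Y_i^X)$ are exact empirical means of the efficient influence functions, so the characterization of efficiency in \cite{van2000asymptotic} (an estimator sequence is asymptotically efficient if and only if it is asymptotically linear with influence function equal to the canonical gradient) closes the argument, the central limit theorem for i.i.d.\ square-integrable variables supplying the limiting normal law. I expect the main obstacle to be the rigorous determination of the tangent space: one must verify that the submodels $(1+tg)\,\ud P$ are genuinely QMD and, more delicately, that no symmetric centered direction is missing from the closure, so that the projection onto $\dot{\mathcal P}_P$ is exactly symmetrization. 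The gradient computations and the identification of the estimators as empirical means of the influence functions are then routine.
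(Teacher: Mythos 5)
Your proposal is correct and takes essentially the same approach as the paper: the same one-dimensional submodels $\ud P_t = (1+tg)\,\ud P$, the same tangent set of centered symmetric functions, the same efficient influence functions $\frac{\Phi_1(Y)+\Phi_1(Y^X)}{2}-\E(\Phi_1(Y))$ and $\Phi_2(Y,Y^X)-\E(\Phi_2(Y,Y^X))$, and the same final identification of $\Phi_N^1$ and $\Phi_N^2$ as empirical means of those influence functions. The only cosmetic difference is that you obtain the efficient influence function by projecting a non-symmetric gradient onto the tangent space (symmetrization as orthogonal projection under the exchangeable law $P$), whereas the paper verifies directly, via exchangeability of $(Y,Y^X)$, that the symmetrized function satisfies the defining identity; these amount to the same computation.
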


\section{Asymptotic properties: metamodel}
\subsection{Metamodel-based estimation}
As said in the introduction, we often are in a situation where the exact output $f$ is too costly to be evaluated numerically (thus, $Y$ and $Y^X$ are not observable variables in our estimation problem) and has to be replaced by a metamodel $\widetilde{f}$, which is a faster to evaluate approximation of $f$. We view this approximation as a perturbation of the exact model by some function $\delta$: 
\[ \widetilde Y=\widetilde f(X,Z)=f(X,Z)+\delta, \]
where the perturbation $\delta=\delta(X,Z,\xi)$ is also a function of a random variable $\xi$ independent from $X$ and $Z$.

We also define, as before 
\[\widetilde Y^X=\widetilde f(X,Z'). \] 
Assuming again that $\widetilde Y$ is non deterministic and in $L^2$, we can consider the following Sobol index, with respect to the metamodel:
\begin{equation}\label{def:sobol_pert}
	\widetilde S^X = \frac{\Var (\E(\widetilde Y|X))}{\Var (\widetilde Y)}
\end{equation}
and its estimators:
\begin{align} \label{e:estimmeta} \widetilde S^X_N &= \frac{ \frac{1}{N} \sum \widetilde Y_i  \widetilde Y_i^X - \left(\frac{1}{N} \sum\widetilde Y_i\right) \left(\frac{1}{N}\sum \widetilde Y_i^X\right) }{ \frac{1}{N}\sum \widetilde Y_i^2 - \left( \frac{1}{N} \sum \widetilde Y_i \right)^2 }\end{align}
\begin{equation} \label{e:estimmeta2} 
\widetilde T^X_N = \frac{ \frac{1}{N} \sum \widetilde Y_i  \widetilde Y_i^X - \left(\frac{1}{N} \sum  \Big[\frac{\widetilde Y_i+\widetilde Y_i^X}{2}\Big]\right)^2 }{\frac{1}{N}\sum \Big[\frac{\widetilde Y_i^2+ (\widetilde Y_i^X)^2}{2}\Big] - \left( \frac{1}{N} \sum  \Big[\frac{\widetilde Y_i+\widetilde Y_i^X}{2}\Big] \right)^2 }.
\end{equation}
The goal of this section is to give sufficient conditions on the perturbation $\delta$ for $\widetilde S^X_N$ and $ \widetilde T^X_N$ to satisfy asymptotic normality (Subsection \ref{ssec:consmeta}), and $\widetilde T^X_N$ to be asymptotically efficient (Subsection \ref{ssec:effmeta}), with respect to the Sobol index of the \emph{true} model $S^X$.

\subsection{Consistency and asymptotic normality} \label{ssec:consmeta}

In the first Subsection (\ref{s:stationnaire}) we suppose that the error term $\delta$ does not depend on $N$. In this case, if the Sobol index of the exact model is different from the Sobol index of the metamodel, then neither consistency nor asymptotic normality are possible. 
In the second subsection (\ref{s:nonstationnaire}), we let $\delta$ depend on $N$ and we give conditions for consistency and asymptotic normality to hold.  

\subsubsection{First case : $\delta$ does not depend on $N$}\label{s:stationnaire}
\begin{rem}\label{prop:pasconsistant}
If $\widetilde {S}^X - S^X\neq0$ then neither $\widetilde S^X_N$ nor $\widetilde T^X_N$ are consistent for estimating $S^X$.

Indeed, we have
\[ \widetilde {S}^X_N - S^X = (\widetilde S^X_N - \widetilde S^X ) + (\widetilde {S}^X - S^X). \]
The first term converges to 0 almost surely by Proposition \ref{prop:cons} applied to $\widetilde{S}^X_N$. However, the second is nonzero by assumption.  The same holds for $\widetilde T^X_N$.

\end{rem}

This remark shows that a naive consideration of the metamodel error (ie., with fixed metamodel) is not satisfactory for an asymptotic justification of the use of a metamodel. More specifically, it is impossible to have asymptotic normality for $\widetilde S^X_N$ and $\widetilde T^X_N$ in any nontrivial case if $\delta$ does not vanish (in some sense) asymptotically. This justifies the consideration of cases where $\delta$ depends on $N$, and this is the object of the next subsection.
 
\subsubsection{Second case : $\Var\;\delta_N$ converges to 0 as $N\rightarrow \infty$}\label{s:nonstationnaire}
We now assume that the perturbation $\delta$ is a function of the sample size $N$. This entails that $\widetilde f$, as well as $\widetilde Y$, $\widetilde Y^X$ and $\widetilde S^X$ depend on $N$. We emphasize this dependence by using the notations $\delta_N$, $\widetilde f_N$, $\widetilde Y_N$, $\widetilde Y^X_N$. We keep, however, using the notations $\widetilde S^X_N$ and $\widetilde T^X_N$ for the estimators of $\widetilde S^X$ defined at \eqref{e:estimmeta} and \eqref{e:estimmeta2}. 

\textbf{Assumption.} We suppose that $\widetilde f_N - f = \delta_N \overset{L^2}{\underset{N\rightarrow+\infty}{\longrightarrow}} c$ for some constant $c$.

\begin{prop}
\label{p:propAA}
	We have $\widetilde S^X \underset{N\rightarrow+\infty}{\longrightarrow} S^X. $
\end{prop}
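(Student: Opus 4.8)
The plan is to show that the metamodel Sobol index $\widetilde S^X$ converges to the true index $S^X$ by proving that both the numerator and denominator of $\widetilde S^X$ converge to those of $S^X$. Recall from Lemma \ref{lemma:cov} that $S^X = \Cov(Y,Y^X)/\Var(Y)$, and by the same reasoning $\widetilde S^X = \Cov(\widetilde Y_N, \widetilde Y_N^X)/\Var(\widetilde Y_N)$. So it suffices to prove that $\Cov(\widetilde Y_N, \widetilde Y_N^X) \to \Cov(Y,Y^X)$ and $\Var(\widetilde Y_N) \to \Var(Y)$, and then to note that $\Var(Y) \neq 0$ so that the ratio passes to the limit.

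First I would record the consequences of the assumption $\delta_N \overset{L^2}{\to} c$. Writing $\widetilde Y_N = Y + \delta_N$ and $\widetilde Y_N^X = Y^X + \delta_N^X$ (where $\delta_N^X = \delta_N(X,Z',\xi')$ denotes the pick-freeze replication of the perturbation), $L^2$ convergence to the constant $c$ gives $\E(\delta_N) \to c$, $\E(\delta_N^2) \to c^2$, hence $\Var(\delta_N) \to 0$; the same holds for $\delta_N^X$ by the exchangeability argument of Lemma \ref{lemm:exch}. The key point is that $L^2$ convergence controls all the cross terms via Cauchy--Schwarz.

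Next I would expand the bilinear quantities. For the denominator, $\Var(\widetilde Y_N) = \Var(Y) + 2\Cov(Y,\delta_N) + \Var(\delta_N)$; the last term vanishes since $\Var(\delta_N)\to 0$, and the middle term is bounded by $2\sqrt{\Var(Y)\,\Var(\delta_N)} \to 0$ by Cauchy--Schwarz, so $\Var(\widetilde Y_N) \to \Var(Y)$. For the numerator, expanding $\Cov(\widetilde Y_N,\widetilde Y_N^X) = \Cov(Y,Y^X) + \Cov(Y,\delta_N^X) + \Cov(\delta_N,Y^X) + \Cov(\delta_N,\delta_N^X)$, each of the three extra covariances is again bounded in absolute value by a product of standard deviations, at least one of which is $\sqrt{\Var(\delta_N)}$ or $\sqrt{\Var(\delta_N^X)}$, and hence tends to zero. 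Therefore $\Cov(\widetilde Y_N,\widetilde Y_N^X) \to \Cov(Y,Y^X)$.

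The main obstacle, such as it is, lies in justifying that the $L^2$ assumption on $\delta_N$ alone (convergence to a constant $c$, rather than to $0$) suffices: one must be careful to note that the additive constant $c$ is irrelevant because variances and covariances are insensitive to constant shifts, which is precisely why the hypothesis can tolerate a nonzero limit $c$. Once the numerator and denominator limits are established and $\Var(Y)>0$ is invoked, the conclusion $\widetilde S^X \to S^X$ follows immediately from the continuity of the quotient map. I would finish by remarking that the constant $c$ plays no role, so the metamodel need only converge to $f$ up to an additive constant for the index to be recovered in the limit.
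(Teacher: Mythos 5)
Your proof is correct, but it takes a genuinely different route from the paper's. The paper works directly with the defining expression $\widetilde S^X=\Var\left(\E(\widetilde Y_N|X)\right)/\Var\left(\widetilde Y_N\right)$: from $\widetilde Y_N\to Y+c$ in $L^2$ it deduces, using the fact that conditional expectation is an $L^2$-contraction, that $\E(\widetilde Y_N|X)\to\E(Y|X)+c$ in $L^2$, and then applies the continuity of the variance under $L^2$ convergence to both numerator and denominator; no pick-freeze representation is invoked at all. You instead rewrite both indices in covariance form via Lemma \ref{lemma:cov}, expand bilinearly, and kill the cross terms by Cauchy--Schwarz together with $\Var(\delta_N)\to 0$. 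Two remarks on what each approach buys. First, your route has a hidden prerequisite: Lemma \ref{lemma:cov} applies to the metamodel only if $\widetilde Y_N$ and $\widetilde Y_N^X$ are identically distributed and conditionally independent given $X$, i.e. if the replication uses fresh copies $(Z',\xi')$ of $(Z,\xi)$; you did write $\delta_N^X=\delta_N(X,Z',\xi')$, and this is the convention the paper itself adopts in the proof of Lemma \ref{lemma100}, but it deserves an explicit word, as does the identity $\Var(\delta_N^X)=\Var(\delta_N)$, which follows from $(X,Z',\xi')$ having the same law as $(X,Z,\xi)$ (equality in distribution of $\delta_N^X$ and $\delta_N$, rather than exchangeability of the pair as you suggest). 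Second, in exchange for this extra structural care, your computation is essentially the one the paper postpones to Lemma \ref{lemma100}: your Cauchy--Schwarz bounds are quantitative and yield $\bigl|\widetilde S^X-S^X\bigr|=O\left(\Var(\delta_N)^{1/2}\right)$, which is exactly the estimate needed later for Theorem \ref{cor:as_norm}, whereas the paper's contraction argument is shorter but purely qualitative. Both treatments dispose of the constant $c$ correctly: variances and covariances are shift-invariant in your argument, and conditional expectation maps $c$ to $c$ in the paper's.
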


\begin{prop}\label{prop:as_norm} Assume there exist $s>0$ and $C>0$ such that
	\begin{equation}\label{e:ass4ps} \forall N,\;\; \E\left(\abs{\widetilde Y_N}^{4+s} \right)<C. \end{equation}

	Then
\begin{equation}\label{cv_n_aa}
\sqrt{N}\left(\widetilde S_N^X - \widetilde{S}^X\right)
\cvloi
\mathcal{N}_1\left(0, \sigma_S^2\right) 
\end{equation}
\begin{equation}\label{cv_n_aa2}
\sqrt{N}\left(\widetilde T_N^X - \widetilde{S}^X\right)
\cvloi
\mathcal{N}_1\left(0, \sigma_T^2\right) 
\end{equation}
where $\sigma_S^2$ and $\sigma_T^2$ are the asymptotic variances of $S_N^X$ and $T_N^X$ given, respectively, in \eqref{cv_n_a} and \eqref{cv_n_a_2}.
\end{prop}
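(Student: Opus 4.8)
The plan is to write both estimators as smooth functions of empirical means of $N$-dependent summands, and then to run a triangular-array version of the argument underlying Proposition \ref{prop:an1}. Concretely, set
\[ V_{N,i} = \left( \widetilde Y_i\, \widetilde Y_i^X,\ \widetilde Y_i,\ \widetilde Y_i^X,\ \widetilde Y_i^2 \right) \in \R^4, \qquad \overline V_N = \frac1N \sum_i V_{N,i}, \]
so that $\widetilde S_N^X = g(\overline V_N)$ with $g(a,b,c,d) = (a-bc)/(d-b^2)$, and likewise $\widetilde T_N^X = g_T(\overline W_N)$ for the rational function $g_T(a,m,q)=(a-m^2)/(q-m^2)$ of the three means $\overline{\widetilde Y_i\widetilde Y_i^X}$, $\overline{(\widetilde Y_i+\widetilde Y_i^X)/2}$, $\overline{(\widetilde Y_i^2+(\widetilde Y_i^X)^2)/2}$. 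The essential difference with the exact model is that, for each fixed $N$, the samples $\widetilde Y_i=\widetilde Y_{N,i}$ are i.i.d. but their common law changes with $N$ through $\delta_N$: we face a \emph{triangular array}, so the ordinary CLT must be replaced by the Lindeberg--Feller theorem, and the delta method by its triangular-array form.

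First I would establish the multivariate CLT $\sqrt N(\overline V_N - \mu_N)\cvloi \mathcal N_4(0,\Sigma)$, where $\mu_N=\E(V_{N,1})$ and $\Sigma$ is the covariance matrix of the \emph{exact}-model summand $(YY^X, Y, Y^X, Y^2)$. The Lindeberg condition holds because each component of $V_{N,i}$ is at most quadratic in $\widetilde Y_i,\widetilde Y_i^X$; hence a uniform bound on $\E(\|V_{N,i}\|^{2+s/2})$ follows from the assumed uniform bound \eqref{e:ass4ps} on $\E(\abs{\widetilde Y_N}^{4+s})$, and a uniform $(2+\varepsilon)$-moment yields the Lindeberg (indeed Lyapunov) condition.

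Second, I must check that $\Sigma_N=\Cov(V_{N,1})\to\Sigma$. Here I would use the standing assumption $\delta_N\to c$ in $L^2$, which gives $\widetilde Y_N\to Y+c$ and $\widetilde Y_N^X\to Y^X+c$ in probability, hence convergence in probability of every entry of $V_{N,1}V_{N,1}^\top$ to its counterpart for $(Y+c,Y^X+c)$. These entries are quartic in $\widetilde Y_N$, so their $L^1$ convergence is \emph{not} implied by $L^2$ control of $\delta_N$ alone; this is where \eqref{e:ass4ps} is used a second time, furnishing the uniform integrability that upgrades convergence in probability to convergence of expectations. The key algebraic observation is that $\Sigma_N$ and $\Sigma$ are built from \emph{centered} quantities, into which the additive constant $c$ does not enter; the limit is therefore exactly the exact-model covariance $\Sigma$, and the shift $c$ is invisible. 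This is precisely why the metamodel estimators inherit the same asymptotic variances $\sigma_S^2,\sigma_T^2$ as in Proposition \ref{prop:an1}.

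Finally I would apply the delta method. Since $\Var(\widetilde Y_N)\to\Var(Y)\neq0$ (again a consequence of the $L^2$ convergence), the denominators stay bounded away from $0$, and $g,g_T$ are $C^1$ on a neighbourhood of the limit $\mu=\lim\mu_N$; moreover $g(\mu_N)=\widetilde S^X$ and $g_T(\mu_N)=\widetilde S^X$ hold \emph{exactly} for every $N$ (by Lemma \ref{lemma:cov} applied to the metamodel). A first-order expansion of $g$ around $\mu_N$, combined with $\nabla g(\mu_N)\to\nabla g(\mu)$, the CLT above, and Slutsky's lemma, gives $\sqrt N(\widetilde S_N^X-\widetilde S^X)\cvloi\mathcal N_1\left(0,\nabla g(\mu)^\top\Sigma\,\nabla g(\mu)\right)$; the quadratic form on the right equals $\sigma_S^2$ because the asymptotic variance of this translation-invariant functional of the output does not depend on $c$, so it coincides with the exact-model value, and the argument for $\widetilde T_N^X$ is identical with $g_T$. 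I expect the main obstacle to be exactly the moment bookkeeping of the third paragraph: controlling the fourth-order mixed moments of the triangular array uniformly in $N$ so that $\Sigma_N\to\Sigma$, since $L^2$ convergence of $\delta_N$ is by itself too weak and must be combined with the uniform $(4+s)$-moment bound to secure the uniform integrability.
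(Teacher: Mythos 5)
Your proposal is correct and takes essentially the same approach as the paper's proof: a triangular-array Lindeberg--Feller central limit theorem whose Lyapunov condition and covariance-matrix convergence are both extracted from the uniform $(4+s)$-moment bound \eqref{e:ass4ps} (the latter via uniform integrability combined with the $L^2$-convergence of $\delta_N$), followed by the delta method exactly as in Proposition \ref{prop:an1}. The only differences are cosmetic: you work with uncentered summand vectors where the paper centers by $\E(Y)$, and you are somewhat more explicit than the paper about why a nonzero limit constant $c$ of $\delta_N$ leaves the limiting variances $\sigma_S^2$ and $\sigma_T^2$ equal to their exact-model values.
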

We are actually interested in the asymptotic distribution of $\sqrt{N}\left(\widetilde{S}_N^X-S^X\right)$. In the remaining of the subsection, we will show that this convergence depends on the rate of convergence to 0 of $\Var(\delta_N)$.

\begin{theorem}\label{cor:as_norm}
Let:
\[ C_{\delta,N}=2\Var(Y)^{1/2}\left[\Corr(Y,\delta^X_N)-\Corr(Y,Y^X)\Corr(Y,\delta_N)\right]+\Var(\delta_N)^{1/2}\left[\Corr(\delta_N,\delta^X_N)-\Corr(Y,Y^X)\right],  \]
for $\delta^X_N=\delta_N(X,Z')$, and, given any $L^2$ random variables $A$ and $B$ of nonzero variance:
        \[ \Corr(A,B)=\frac{\Cov(A,B)}{\sqrt{\Var A\, \Var B}}. \]

Assume that $C_{\delta,N}$ does not converge to 0.
\begin{enumerate}
	\item If $\Var(\delta_N)=o\left(\frac{1}{N}\right)$, then asymptotic normalities of $\widetilde S_N^X$ and $\widetilde T_N^X$ for $S^X$ hold, i.e. 
        	\begin{equation}
		\label{e:asnorm}
		\sqrt N (\widetilde S_N^X - S^X)  \underset{N\rightarrow+\infty}{\longrightarrow} \mathcal N (0, \sigma_S^2) 
	\end{equation}
         and:       	\begin{equation}
		\label{e:asnorm2}
		\sqrt N (\widetilde T_N^X - S^X)  \underset{N\rightarrow+\infty}{\longrightarrow} \mathcal N (0, \sigma_T^2).
	\end{equation}

\item If $N\Var(\delta_N)\to \infty$, then \eqref{e:asnorm} and \eqref{e:asnorm2}.
\item If $C_{\delta,N}$ converges to a nonzero constant $C$ and  $\gamma\in\R$ so that $\Var(\delta_N)=\frac{\gamma}{CN}+o\left(\frac{1}{N}\right)$, then:
\[ \sqrt{N}(\widetilde S_N^X- S^X) \underset{N\rightarrow+\infty}{\longrightarrow} \mathcal N (\gamma, \sigma_S^2),\]
and:
\[ \sqrt{N}(\widetilde T_N^X- S^X) \underset{N\rightarrow+\infty}{\longrightarrow} \mathcal N (\gamma, \sigma_T^2).\]
\end{enumerate}
\end{theorem}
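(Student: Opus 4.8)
The plan is to base everything on the telescoping decomposition
\[
\sqrt N\bigl(\widetilde S_N^X - S^X\bigr) = \sqrt N\bigl(\widetilde S_N^X - \widetilde S^X\bigr) + \sqrt N\bigl(\widetilde S^X - S^X\bigr),
\]
together with the analogous identity for $\widetilde T_N^X$, and to treat the two summands separately. The first summand is random and already under control: under the uniform moment bound \eqref{e:ass4ps}, Proposition \ref{prop:as_norm} gives $\sqrt N(\widetilde S_N^X - \widetilde S^X) \cvloi \mathcal N_1(0,\sigma_S^2)$ via \eqref{cv_n_aa} and $\sqrt N(\widetilde T_N^X - \widetilde S^X) \cvloi \mathcal N_1(0,\sigma_T^2)$ via \eqref{cv_n_aa2}. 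The second summand is purely deterministic, so the entire content of the theorem reduces to pinning down the asymptotics of the bias $\sqrt N(\widetilde S^X - S^X)$, after which all three cases follow by Slutsky's lemma. Observe that both estimators share \emph{exactly the same} bias term, which is precisely why only the asymptotic variance ($\sigma_S^2$ versus $\sigma_T^2$) distinguishes the two conclusions.

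The key step is an exact expansion of this bias. Writing $\widetilde Y_N = Y + \delta_N$ and $\widetilde Y_N^X = Y^X + \delta_N^X$ and using $\widetilde S^X = \Cov(\widetilde Y_N,\widetilde Y_N^X)/\Var(\widetilde Y_N)$, I would expand numerator and denominator by bilinearity,
\[
\Cov(\widetilde Y_N,\widetilde Y_N^X) = \Cov(Y,Y^X) + \Delta a,\qquad \Var(\widetilde Y_N) = \Var(Y) + \Delta b,
\]
with $\Delta a = \Cov(Y,\delta_N^X)+\Cov(\delta_N,Y^X)+\Cov(\delta_N,\delta_N^X)$ and $\Delta b = 2\Cov(Y,\delta_N)+\Var(\delta_N)$. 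Subtracting the two ratios yields the clean identity
\[
\widetilde S^X - S^X = \frac{\Delta a - S^X\,\Delta b}{\Var(\widetilde Y_N)}.
\]
At this point I would invoke the exchangeability of the construction: swapping $(Z,\xi)$ with $(Z',\xi')$ shows $\Cov(\delta_N,Y^X)=\Cov(Y,\delta_N^X)$ and $\Var(\delta_N^X)=\Var(\delta_N)$. Rewriting each covariance as a correlation times the relevant standard deviations, using $S^X=\Corr(Y,Y^X)$, and factoring out $\Var(\delta_N)^{1/2}$, the numerator collapses exactly to $\Var(\delta_N)^{1/2}\,C_{\delta,N}$, so that
\[
\sqrt N\bigl(\widetilde S^X - S^X\bigr) = \frac{\sqrt{N\,\Var(\delta_N)}\;C_{\delta,N}}{\Var(\widetilde Y_N)}.
\]
I expect this algebraic collapse — matching the expansion term by term against the definition of $C_{\delta,N}$ — to be the main obstacle; it is pure bookkeeping, but the exchangeability identifications and the correct grouping of the correlation terms are exactly what make $C_{\delta,N}$ emerge.

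With this identity the three regimes are then immediate, using that $\Var(\widetilde Y_N)\to\Var(Y)$ (since $\delta_N\to c$ in $L^2$ forces $\Var(\delta_N)\to0$ and $\Cov(Y,\delta_N)\to0$ by Cauchy--Schwarz, cf. Proposition \ref{p:propAA}) and that $C_{\delta,N}$ is bounded (correlations are at most $1$ in modulus and $\Var(\delta_N)$ is bounded). When $\Var(\delta_N)=o(1/N)$ the factor $\sqrt{N\,\Var(\delta_N)}\to0$, the bias vanishes, and Slutsky's lemma yields \eqref{e:asnorm}--\eqref{e:asnorm2}; this is case (1). In the critical regime $\Var(\delta_N)=\gamma/(CN)+o(1/N)$ with $C_{\delta,N}\to C\neq0$, substituting into the identity makes $\sqrt N(\widetilde S^X - S^X)$ converge to a finite nonzero constant, which becomes the mean of the limiting Gaussian in case (3), Slutsky combining it additively with the centered fluctuation of the random part. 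Finally, when $N\,\Var(\delta_N)\to\infty$ while $C_{\delta,N}$ stays bounded away from $0$, the deterministic term $\sqrt{N\,\Var(\delta_N)}\,C_{\delta,N}/\Var(\widetilde Y_N)$ is unbounded and the bias dominates, which is what case (2) records.
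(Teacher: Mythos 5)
Your proposal is correct and follows essentially the same route as the paper: the telescoping decomposition into $\sqrt N(\widetilde S_N^X - \widetilde S^X) + \sqrt N(\widetilde S^X - S^X)$ with Proposition \ref{prop:as_norm} controlling the random part is the paper's own starting point, and your exact bias identity $\sqrt N\left(\widetilde S^X - S^X\right) = \sqrt{N\Var(\delta_N)}\,C_{\delta,N}/\Var(\widetilde Y_N)$, obtained by the same covariance expansion and exchangeability bookkeeping, is precisely the paper's Lemma \ref{lemma100}. The three regimes are then dispatched by Slutsky's lemma exactly as in the paper, including the correct reading of case (2) as failure of asymptotic normality when the deterministic bias diverges.
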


\begin{rem}
Obviously, if $C_{\delta,N}$ converges to 0, then asymptotic normalities of $\widetilde S_N^X$ and $\widetilde T_N^X$ hold under weaker assumptions on $\Var(\delta_N)$.
\end{rem}

\subsection{Asymptotic efficiency}\label{ssec:effmeta}
\begin{prop}[Asymptotic efficiency for the metamodel]\label{prop:aem}
Assume
	\begin{enumerate}
		\item $\exists s>0, C>0 \text{ s.t. } \forall N, \; \E\left(\big\vert Y\big\vert^{4+s}\right)<C $ and $\E\left(\big\vert\widetilde{Y}\big\vert^{4+s}\right)<C$ ;
		\item $N \Var(\delta_N) \rightarrow 0$ ;
		\item $\sqrt N \E(\delta_N) \rightarrow 0$.
	\end{enumerate}
Then $\left\{ \widetilde T_N^X \right\}$ is asymptotically efficient for estimating $S^X$.
\end{prop}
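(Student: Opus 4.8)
The plan is to show that $\widetilde T_N^X$ is asymptotically equivalent to the exact-model estimator $T_N^X$, namely that $\sqrt N\big(\widetilde T_N^X - T_N^X\big)\overset{\P}{\longrightarrow}0$ in probability, and then to invoke Proposition \ref{prop:ae}. The justification for this reduction is that asymptotic efficiency is a property of the (efficient) influence function: since $T_N^X$ is regular and asymptotically linear with the efficient influence function at $P$, any sequence differing from it by $o_\P(N^{-1/2})$ shares the same asymptotic linear expansion, hence the same limiting law $\mathcal N_1(0,\sigma_T^2)$ and the same asymptotic variance, and is therefore itself asymptotically efficient for $S^X$. Note first that assumptions (2) and (3) give $\E(\delta_N^2)=\Var(\delta_N)+\E(\delta_N)^2=o(1/N)$, so $\delta_N\to0$ in $L^2$ (the constant $c$ of Proposition \ref{p:propAA} is $0$) and $\widetilde S^X\to S^X$.

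I would then reduce the statement on ratios to one on numerators and denominators. Writing $T_N^X=\mathcal N_N/\mathcal D_N$ and $\widetilde T_N^X=\widetilde{\mathcal N}_N/\widetilde{\mathcal D}_N$, with $\mathcal N_N,\mathcal D_N$ the numerator and denominator of \eqref{e:txn} and $\widetilde{\mathcal N}_N,\widetilde{\mathcal D}_N$ those of \eqref{e:estimmeta2}, one has
\[ \widetilde T_N^X-T_N^X=\frac{(\widetilde{\mathcal N}_N-\mathcal N_N)\,\mathcal D_N-\mathcal N_N\,(\widetilde{\mathcal D}_N-\mathcal D_N)}{\widetilde{\mathcal D}_N\,\mathcal D_N}. \]
By the strong law of large numbers $\mathcal D_N\to\Var(Y)$ and $\widetilde{\mathcal D}_N\to\Var(\widetilde Y_N)$, which converges to $\Var(Y)>0$, while $\mathcal N_N$ converges to a finite constant. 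Hence, by Slutsky, it suffices to prove that $\sqrt N\,(\widetilde{\mathcal N}_N-\mathcal N_N)\overset{\P}{\to}0$ and $\sqrt N\,(\widetilde{\mathcal D}_N-\mathcal D_N)\overset{\P}{\to}0$.

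The heart of the proof is the expansion of these two differences. Substituting $\widetilde Y_i=Y_i+\delta_{N,i}$ and $\widetilde Y_i^X=Y_i^X+\delta_{N,i}^X$ and subtracting, every contribution to $\widetilde{\mathcal N}_N-\mathcal N_N$ and $\widetilde{\mathcal D}_N-\mathcal D_N$ is an empirical average of a product containing at least one factor $\delta_{N,i}$ or $\delta_{N,i}^X$; the squared-mean terms are handled by factoring the difference of squares, one factor of which converges to a constant (a Slutsky argument paired with the $L^2$ estimates below). For a generic \emph{cross term} such as $\frac1N\sum Y_i\,\delta_{N,i}^X$, I would control the scaled quantity $\frac1{\sqrt N}\sum Y_i\delta_{N,i}^X$ by its mean and variance. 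Its mean $\sqrt N\,\E(Y\delta_N^X)=\sqrt N\big(\Cov(Y,\delta_N^X)+\E(Y)\E(\delta_N)\big)$ tends to $0$, since $\abs{\Cov(Y,\delta_N^X)}\le\sqrt{\Var(Y)\Var(\delta_N)}=o(N^{-1/2})$ by (2) and $\abs{\E(Y)\E(\delta_N)}=o(N^{-1/2})$ by (3) — this is precisely where both (2) and (3) are needed. Its variance equals $\Var(Y\delta_N^X)\le\E\big(Y^2(\delta_N^X)^2\big)$, and this is where assumption (1) is essential.

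To show $\E\big(Y^2(\delta_N^X)^2\big)\to0$, a bare Cauchy–Schwarz only yields boundedness; instead I would use Hölder together with $L^p$-interpolation. With conjugate exponents $q=(4+s)/2$ and $p=(4+s)/(2+s)$ one gets $\E\big(Y^2(\delta_N^X)^2\big)\le\norm{Y}_{4+s}^2\,\norm{\delta_N}_{2p}^2$ where $2<2p<4+s$, and interpolating $\norm{\delta_N}_{2p}\le\norm{\delta_N}_2^{1-\lambda}\norm{\delta_N}_{4+s}^{\lambda}$ with $\lambda=2/(2+s)$, using $\norm{\delta_N}_{4+s}=O(1)$ (Minkowski applied to $\delta_N=\widetilde Y_N-Y$ and assumption (1)) and $\norm{\delta_N}_2=o(N^{-1/2})$, yields $\E\big(Y^2(\delta_N^X)^2\big)=o\big(N^{-s/(2+s)}\big)\to0$. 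Thus each cross term converges to $0$ in $L^2$, hence in probability; the \emph{pure} terms like $\frac1{\sqrt N}\sum\delta_{N,i}\delta_{N,i}^X$ are smaller still. Summing the finitely many contributions gives $\sqrt N(\widetilde{\mathcal N}_N-\mathcal N_N)\to0$ and $\sqrt N(\widetilde{\mathcal D}_N-\mathcal D_N)\to0$ in probability, closing the argument. I expect this variance control — extracting genuine smallness from a perturbation that is only $o(N^{-1/2})$ in $L^2$ yet merely bounded in $L^{4+s}$ — to be the main obstacle, with the bookkeeping of the product-of-averages terms the other point requiring care.
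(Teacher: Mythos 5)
Your proof is correct, and its skeleton is the same as the paper's: show that replacing $(Y_i,Y_i^X)$ by $(\widetilde Y_i,\widetilde Y_i^X)$ perturbs the relevant empirical quantities by only $o_{\P}(N^{-1/2})$ — with assumptions (2) and (3) controlling the \emph{means} of the $\delta$-terms and assumption (1) controlling their \emph{variances} — and then transfer asymptotic efficiency across this equivalence via the influence-function characterization (van der Vaart, Theorem 25.23). The differences are in execution. First, the paper applies the equivalence one level earlier: it proves $\sqrt N(U_N-\widetilde U_N)\overset{\P}{\longrightarrow}0$ for the vector of empirical moments $U_N=\bigl(\frac1N\sum Y_iY_i^X,\ \frac1N\sum\frac{Y_i+Y_i^X}{2},\ \frac1N\sum\frac{Y_i^2+(Y_i^X)^2}{2}\bigr)$, deduces efficiency of $\widetilde U_N$ from that of $U_N$, and only then forms the ratio via the efficiency delta method (Theorem 25.47). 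This packages away all of your ratio bookkeeping, including the difference-of-squares terms and the slight circularity in your Slutsky step: you invoke an SLLN for $\widetilde{\mathcal D}_N$, but it is a triangular-array average to which the SLLN does not directly apply; in your scheme its convergence to $\Var(Y)$ should instead be \emph{deduced} from $\sqrt N(\widetilde{\mathcal D}_N-\mathcal D_N)\overset{\P}{\to}0$, which you prove anyway, so only the order of steps needs care. Second, for the key step $\E\bigl(Y^2(\delta_N^X)^2\bigr)\to0$, the paper argues softly: $\E(\delta_N^2)\to0$ gives $\delta_N\to0$ in probability, boundedness in $L^{4+s}$ makes $\{\delta_N^4\}_N$ uniformly integrable, hence $\E(\delta_N^4)\to0$, and Cauchy--Schwarz then bounds $\Var(\delta_N Y)$ by $C\E(\delta_N^4)^{1/2}$. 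Your H\"older-plus-interpolation argument (exponents $(4+s)/2$ and $(4+s)/(2+s)$, interpolation weight $\lambda=2/(2+s)$ — the arithmetic checks out) proves the same limit but is quantitative, yielding the explicit rate $o\bigl(N^{-s/(2+s)}\bigr)$, whereas the paper's uniform-integrability route is shorter but rate-free. The mean control is the same in both proofs, and both use the three hypotheses in exactly the same places.
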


\begin{rem}
	By Minkowski inequality, the first hypothesis implies $\E(\delta_N^{4+s})<2C^{\frac{1}{4+s}}$ and the asymptotic normality by Lemma \ref{prop:as_norm} and Theorem \ref{cor:as_norm}.
\end{rem}

\section{Numerical illustrations}
In this section, we illustrate the asymptotic results of Sections \ref{ssec:cons} and \ref{ssec:consmeta} when the exact model is the Ishigami function \cite{ishigami1990importance}:
\begin{equation}
	\label{e:ishigam}
	f(X_1,X_2,X_3)=\sin X_1+7 \sin^2 X_2+0.1 X_3^4 \sin X_1 
\end{equation}
for $(X_j)_{j=1,2,3}$ are i.i.d. uniform random variables in $[-\pi;\pi]$. In this case, all the integrability conditions are satisfied (we even have $Y \in L^\infty$).

The Sobol index of $f$ with respect to input variable $X_1$ is $S^X$ defined in \eqref{def:sobol} for $X=X_1$ and $Z=(X_2,X_3)$; we denote it by $S^1$. Similarly, $S^2$ (resp. $S^3$) is $S^X$ obtained taking $X=X_2$ and $Z=(X_1,X_3)$ (resp. $X=X_3$ and $Z=(X_1,X_2)$).

Exact values of these indices are analytically known: 
\[ S^1=0.3139, \;\; S^2=0.4424, \;\; S^3=0. \]
For a sample size $N$, a risk level $\alpha \in ]0;1[$  and for each input variable, a confidence interval for $S^X$ ($S^X$ being one of $S^1$, $S^2$ or $S^3$) of confidence level $1-\alpha$ can be estimated -- using evaluations of the true model $f$ -- by approximating the distribution of $S^X_N$ (or $T^X_N$) by its Gaussian distribution given in Proposition \ref{cv_n_a}, using empirical estimators of the asymptotic variances stated in this Proposition.
 
In the case where only a perturbated model (metamodel) $\widetilde f_N=f+\delta_N$ is available, a confidence interval can still be estimated by using the $\widetilde S^X_N$ (or $\widetilde T^X_N$) estimator. 

Thanks to Proposition \ref{prop:as_norm}, the level of the resulting confidence interval should be close to $1-\alpha$ for sufficiently large values of $N$ if (and only if) $\Var \delta_N$ decreases sufficiently quickly with $N$.

The levels of the obtained confidence interval can be estimated by computing a large number $R$ of confidence interval replicates, and by considering the empirical coverage, that is, the proportion of intervals containing the true index value; it is well known that this empirical coverage strongly converges to the level of the interval as $R$ goes to infinity.

In the next subsections, we present the estimations of the levels of the confidence interval for the Ishigami model \eqref{e:ishigam} using the true model (Subsection \ref{ss:res1}), and,  with various synthetic model perturbations (Subsections \ref{ss:res2} and \ref{ss:res3}), as well as RKHS (Kriging) metamodels (Subsection \ref{ss:rkhsmeta}) and nonparametric regression metamodels (Subsection \ref{ss:regrmeta}). We begin by comparing $S_N$ and $T_N$ on the exact model (Subsection \ref{ss:res1}), then we illustrate the generalization to the metamodel case on the widespread estimator $S_N$; the condition to ensure asymptotic normality in the metamodel is the same for $S_N$ and $T_N$. All simulations have been made with $R=1000$ and $\alpha=0.05$.

\subsection{Exact model} \label{ss:res1}
Figure \ref{f:1} shows the empirical coverage of the asymptotic confidence interval built using the $S_N^X$ estimator, plotted as a function of the sample size $N$. The theoretical level $0.95$ is represented with a dotted line. Figure \ref{f:1bis} does the same using the $T_N^X$ estimator.
\begin{figure}
	\begin{center}
		\includegraphics[scale=.4]{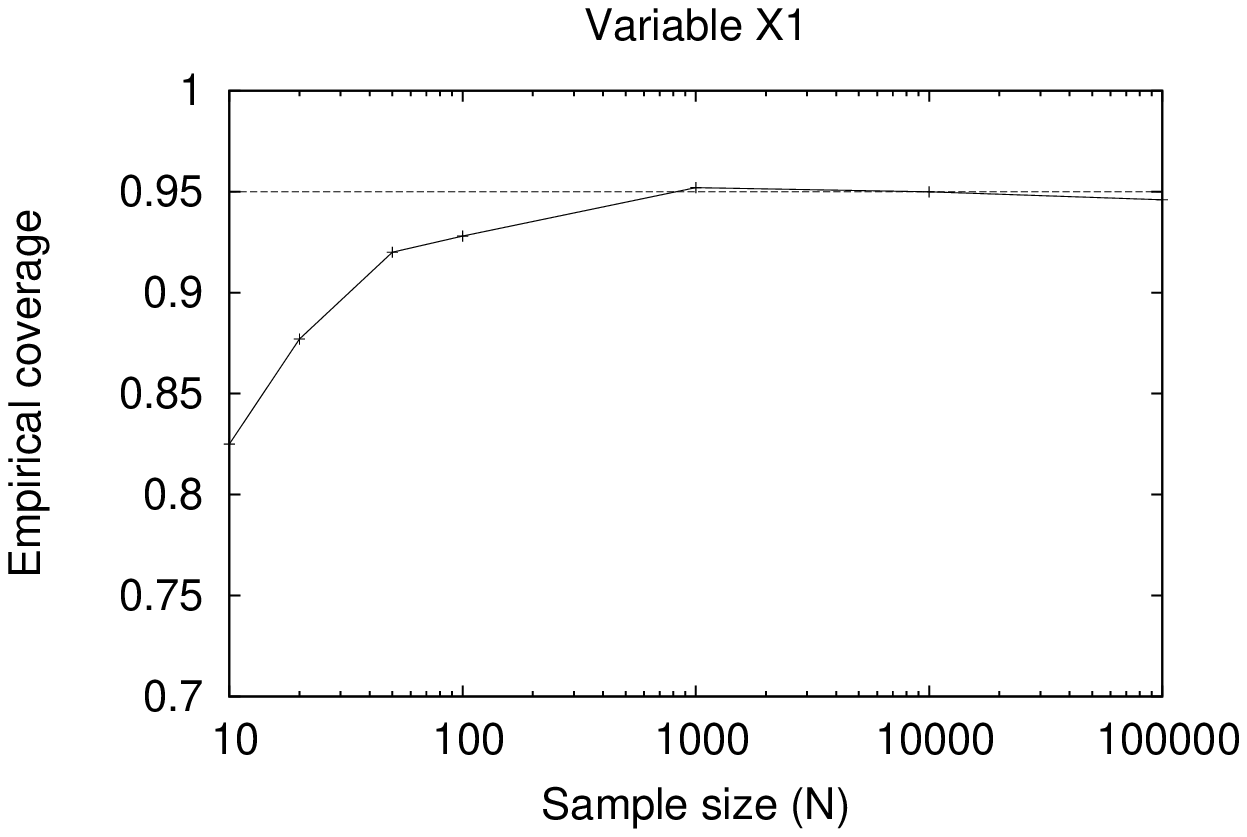}
		\includegraphics[scale=.4]{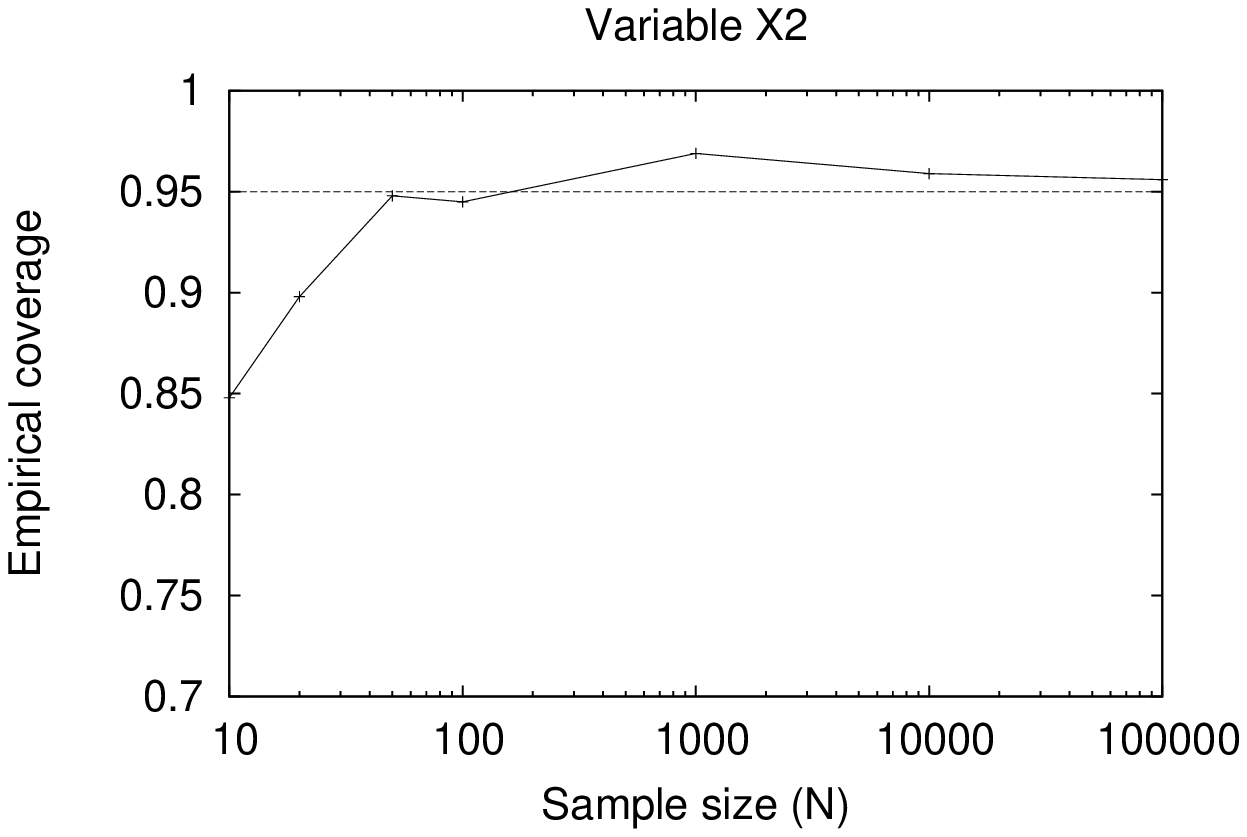}
		\includegraphics[scale=.4]{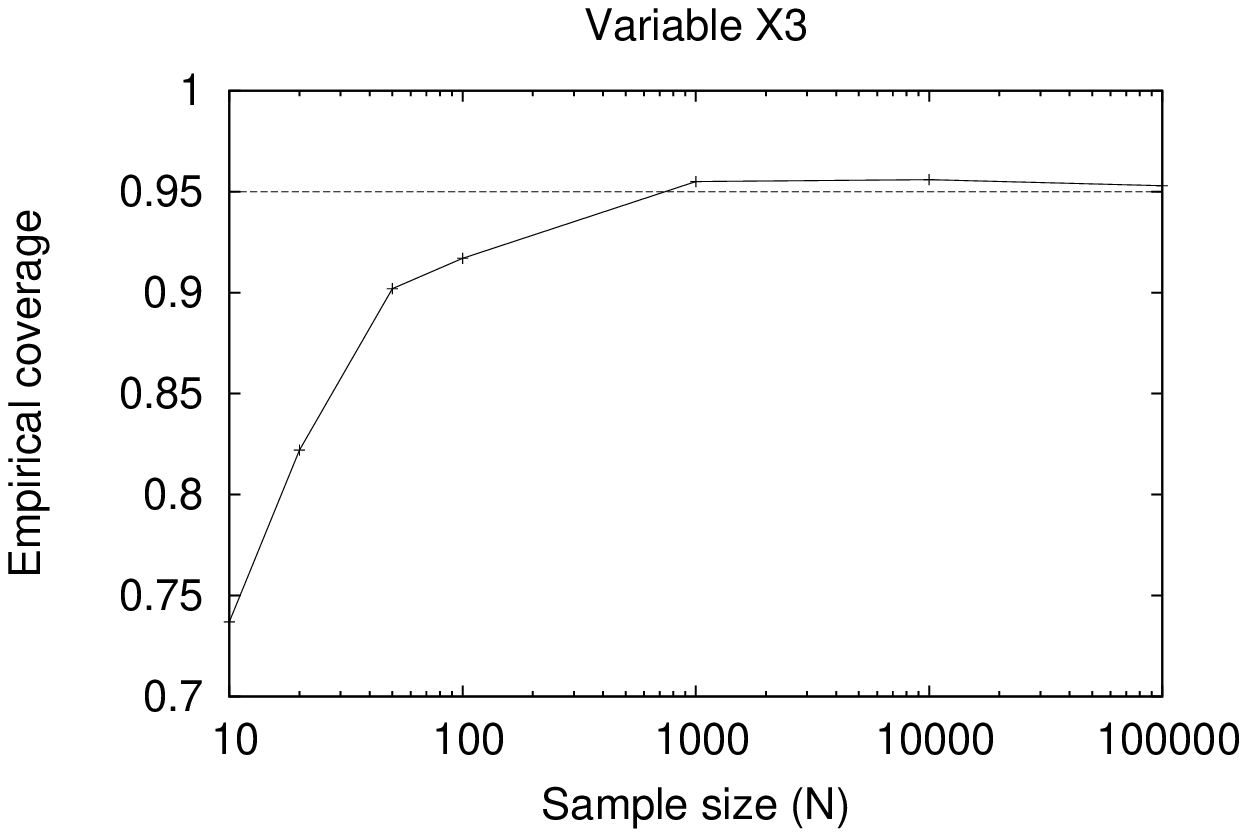}
		\caption{Empirical coverages of asymptotic confidence intervals for $S^1$ (left), $S^2$ (center) and $S^3$ (right), as a function of the sample size. The $S_N$ estimator is used. }
		\label{f:1}
	\end{center}
\end{figure}

\begin{figure}
	\begin{center}
		\includegraphics[scale=.4]{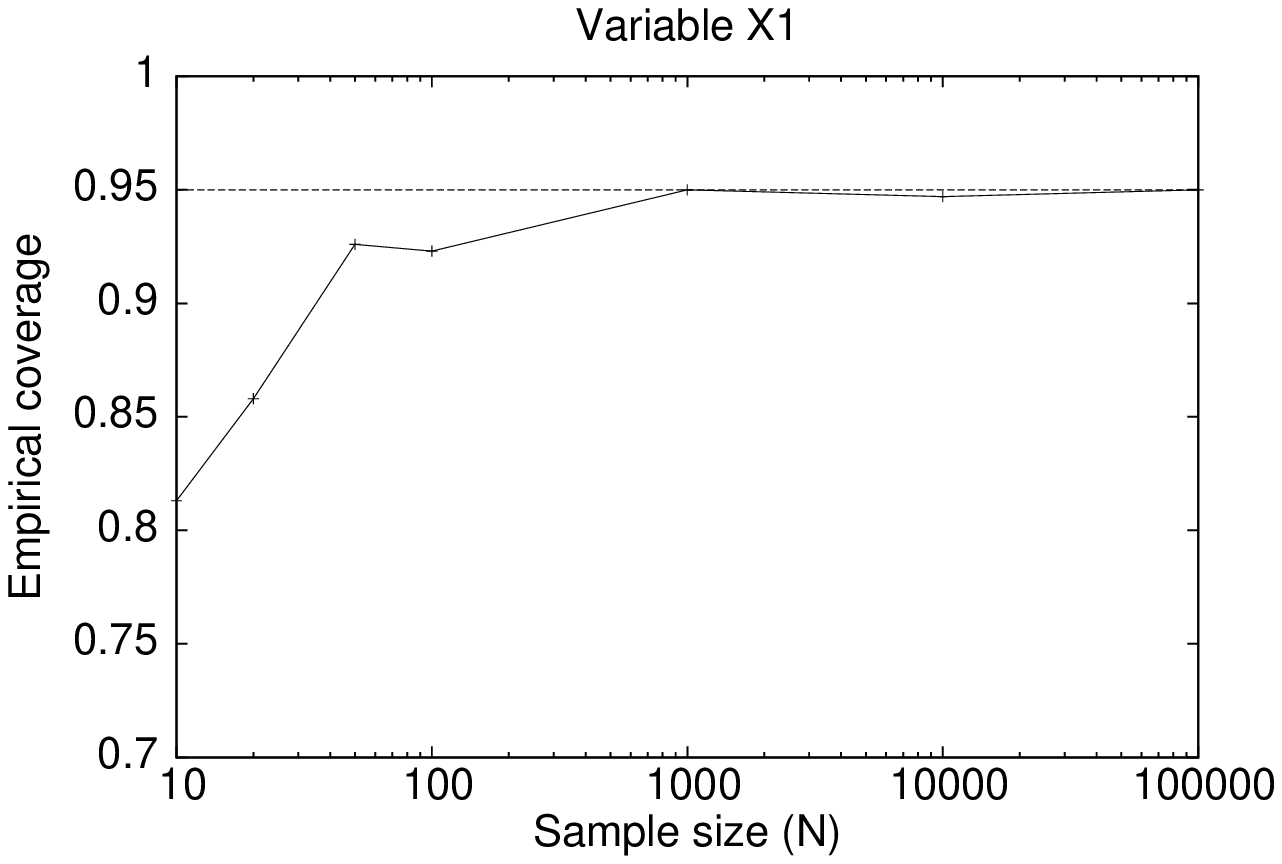}
		\includegraphics[scale=.4]{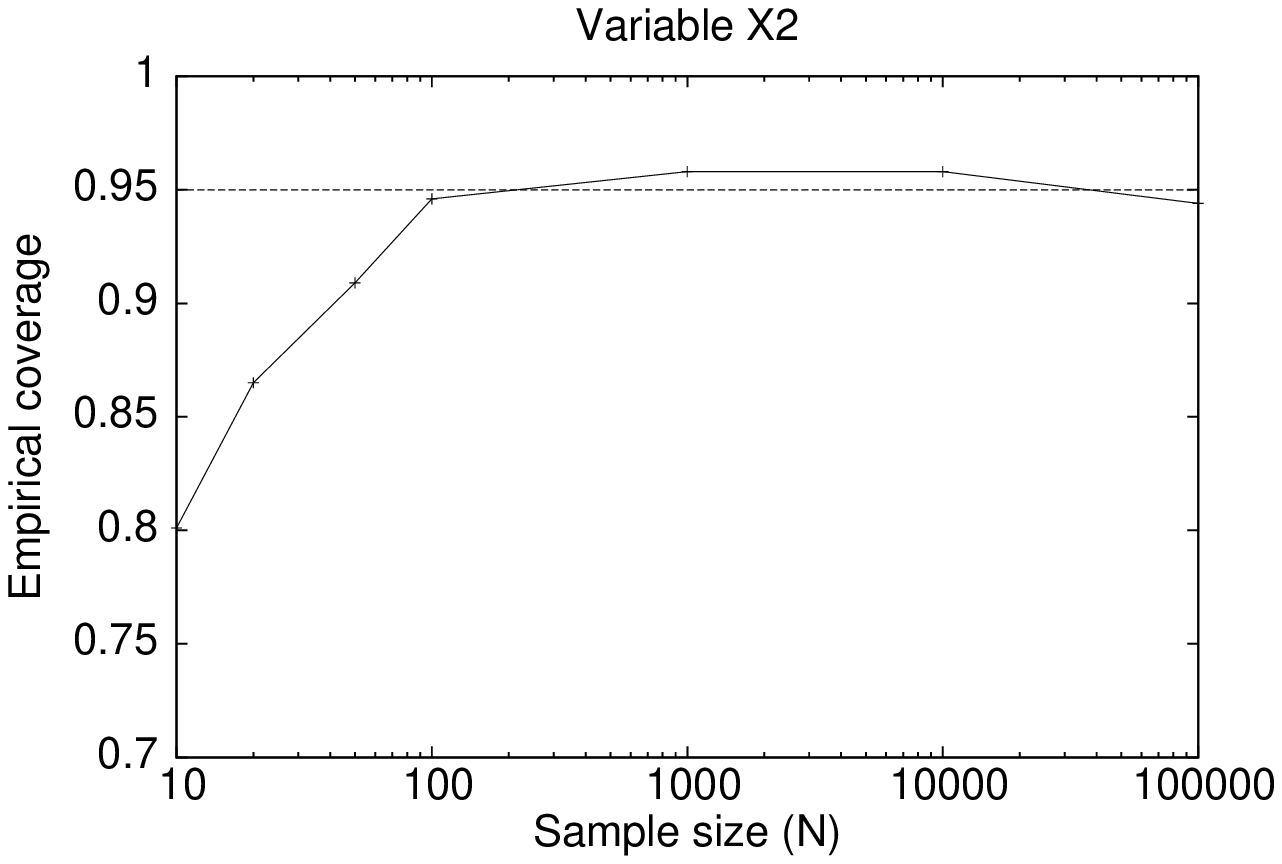}
		\includegraphics[scale=.4]{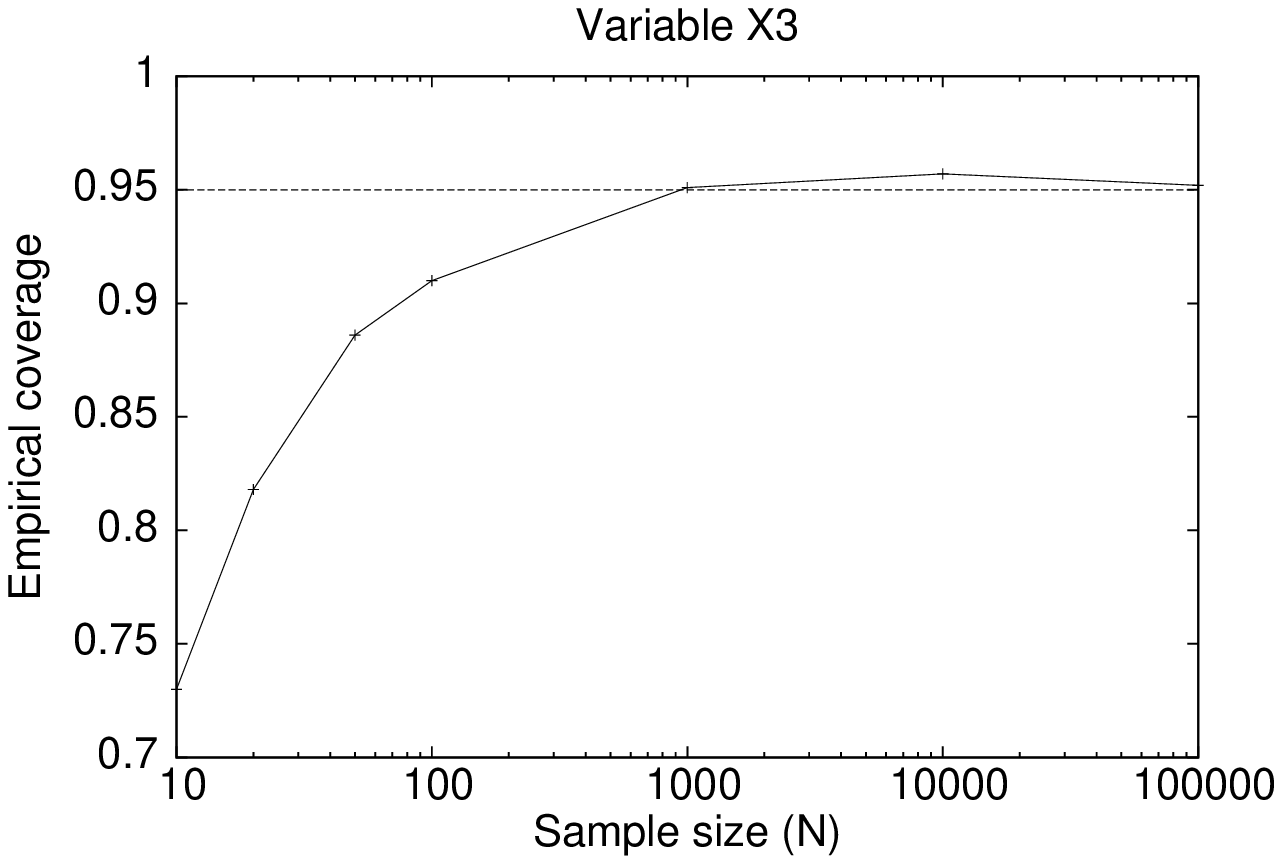}
		\caption{Empirical coverages of asymptotic confidence intervals for $S^1$ (left), $S^2$ (center) and $S^3$ (right), as a function of the sample size (for the exact model). The $T_N$ estimator is used. }
		\label{f:1bis}
	\end{center}
\end{figure}

\begin{figure}
	\begin{center}
		\includegraphics[scale=.4]{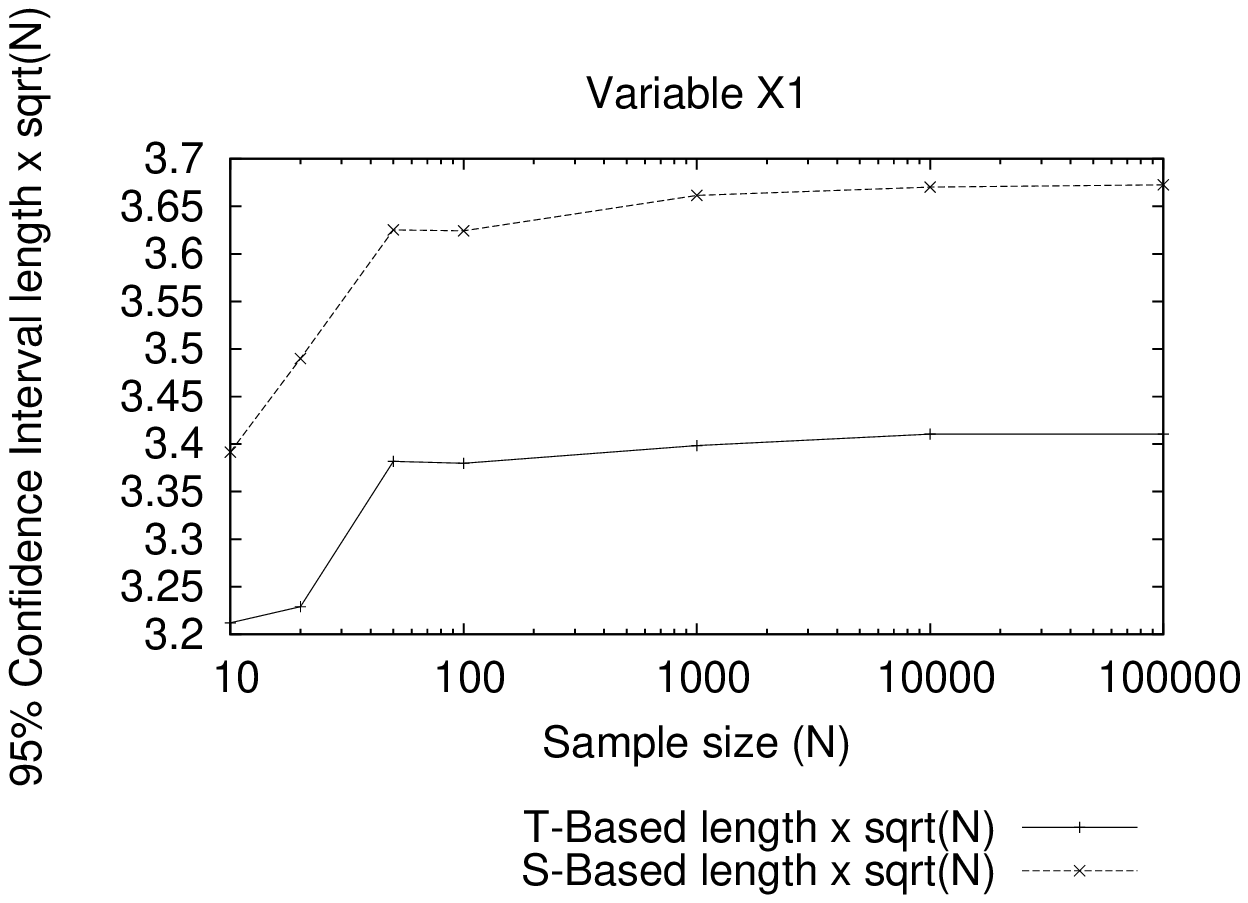}
		\includegraphics[scale=.4]{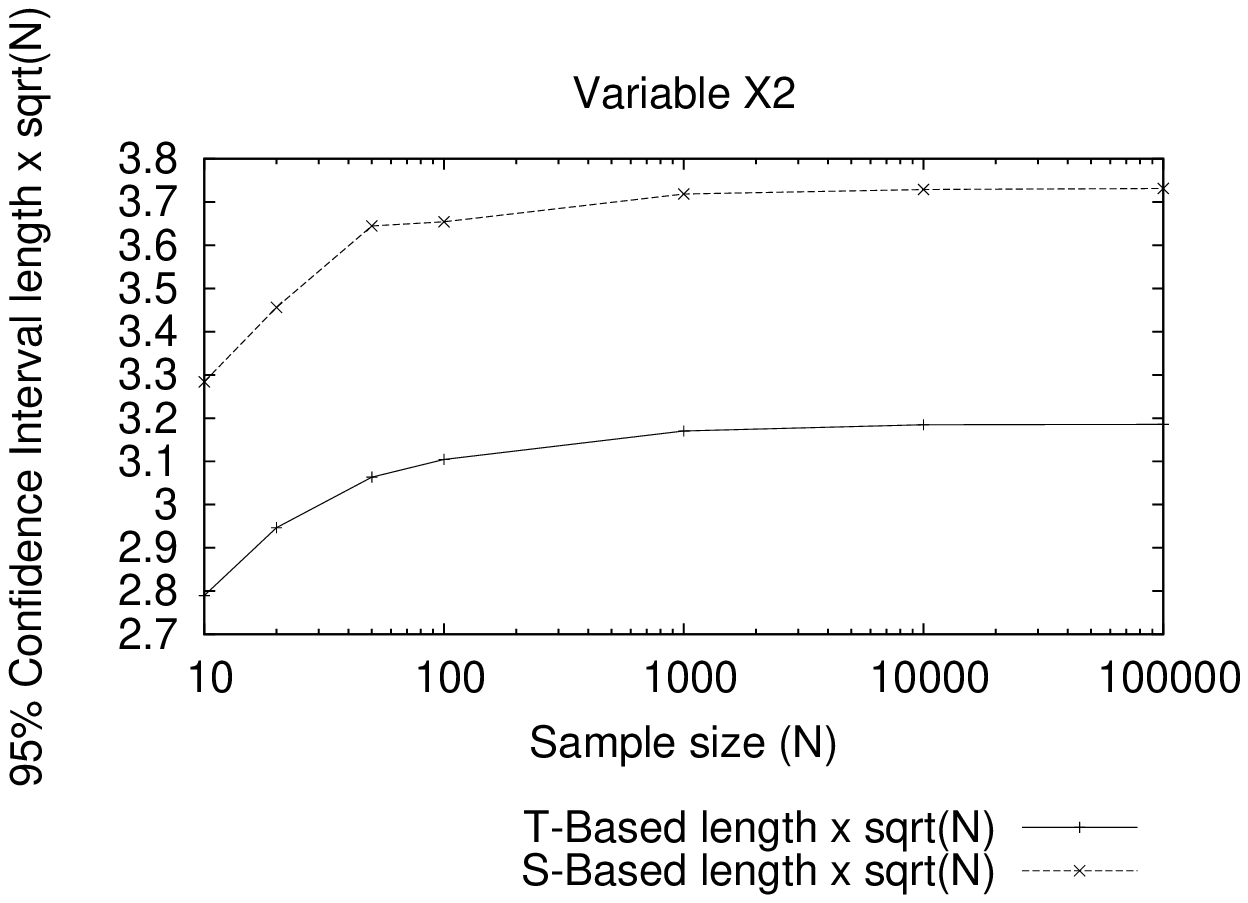}
		\includegraphics[scale=.4]{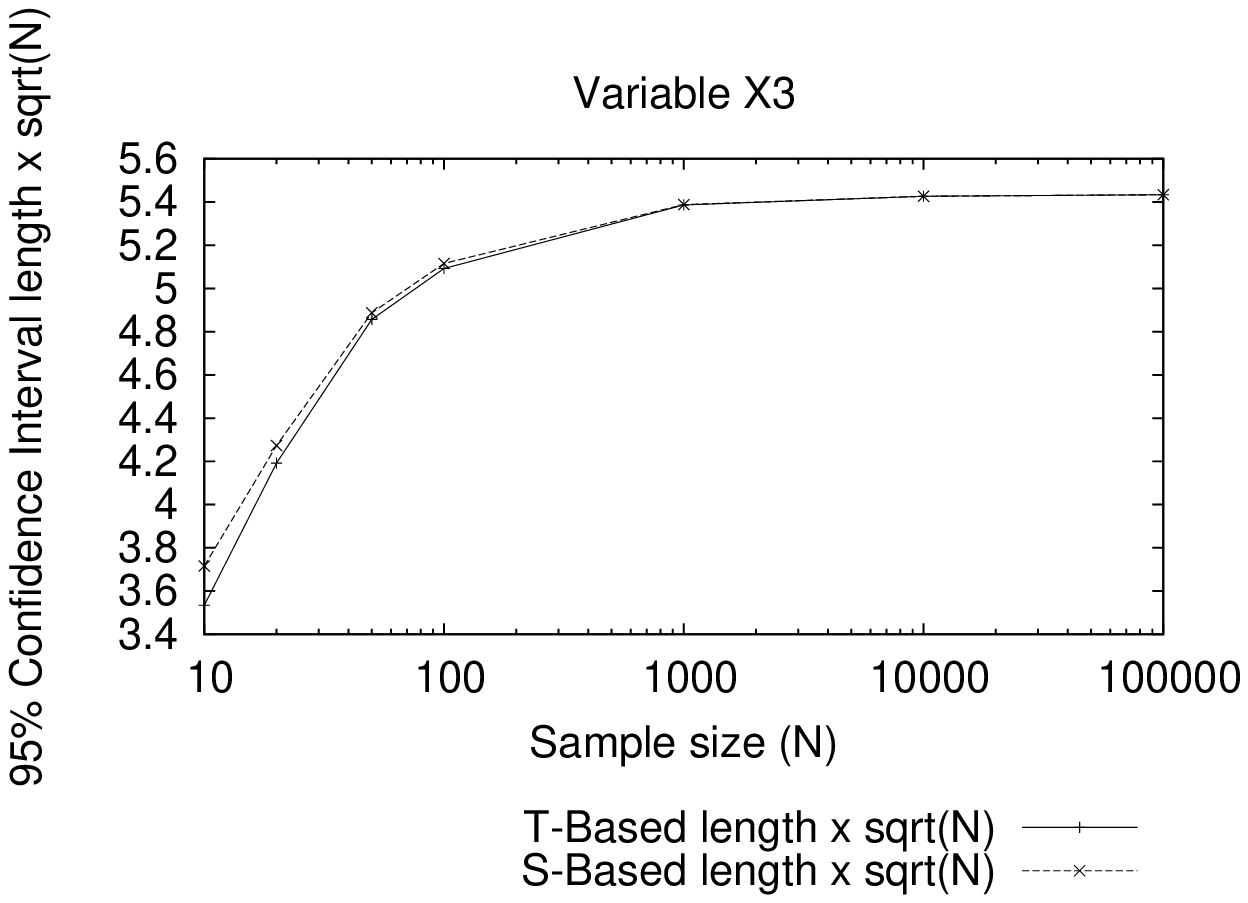}
		\caption{Lengths (rescaled by $\sqrt N$) of the estimated 95\% confidence intervals for $S^1$ (left), $S^2$ (center) and $S^3$ (right), as functions of the sample size (for the exact model). In solid line: length of the interval built from $T_N$ estimator; in dotted line: length of the interval built from $S_N$ estimator. }
		\label{f:compaEstim}
	 \end{center}
\end{figure}

We see that the coverages get closer to the target level $0.95$ as $N$ increases, thereby assessing the reliability of the asymptotic confidence interval.

Figure \ref{f:compaEstim} compares the efficiency of $S_N^X$ and $T_N^X$ by plotting the confidence interval lengths for the two estimators, as functions of the sample size. As the lengths for both estimators are $O(1/\sqrt N)$, we plot the lengths multiplied by $\sqrt N$. We see that $T_N^X$ always produce smaller confidence intervals, except for $X_3$ where the lengths are sensibly the same; this conclusion fully agrees with Proposition \ref{prop:varless}.

\subsection{Gaussian-perturbated model} \label{ss:res2}
We consider a perturbation $\widetilde f_N$ of the original output $f$:
\[ \widetilde f_N=f+\frac{5 \xi}{N^{\beta/2}} \]
where $\beta>0$ and $\xi$ is a standard Gaussian. 

The perturbation $\delta_N=5 \frac{\xi}{N^{\beta/2}}$ leads to $\Var\delta_N \propto N^{-\beta}$. Since:
\[ C_\delta = O\left(\Var(\delta_N)^{1/2}\right)=O\left( N^{-\beta/2} \right), \]
the proof of Theorem \ref{cor:as_norm} shows that $\widetilde S_N$ is asymptotically normal for $S$ if $\beta>1/2$. For indices relative to $X_1$ and $X_2$, this sufficient condition is also necessary, as $C_\delta$ is actually equivalent to $N^{-\beta/2}$. For $X_3$, we have $C_\delta=0$ so that $\widetilde S_N$ is asymptotically normal for $S$ for any positive $\beta$.

This is illustrated for $N=50000$ in Figure \ref{f:2}. We see that the empirical coverages of the confidence interval for $S^1$ and $S^2$ jump to $0.95$ near $\beta=1/2$, while, for $S^3$, this coverage is always close to $0.95$.

\begin{figure}
	\begin{center}
		\includegraphics[scale=.4]{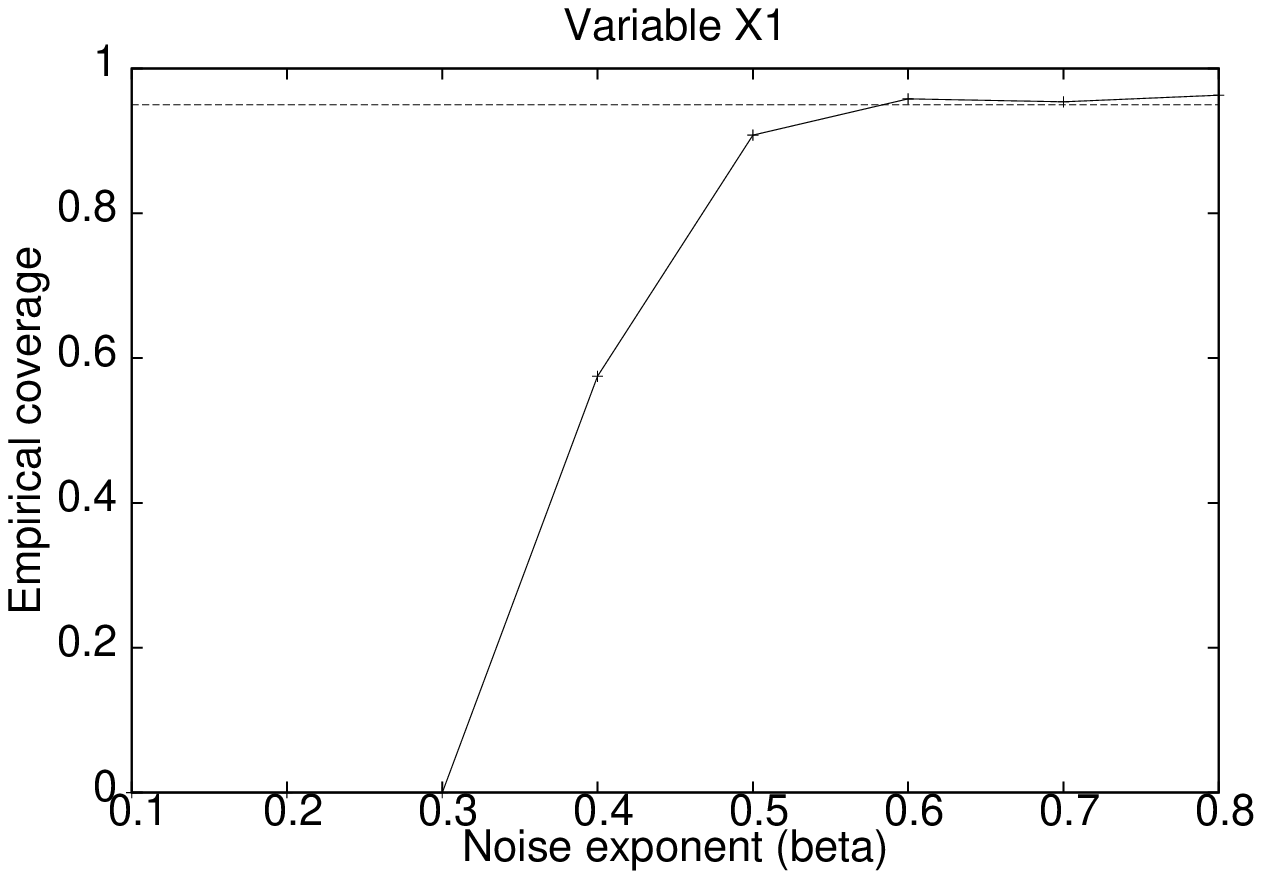}
		\includegraphics[scale=.4]{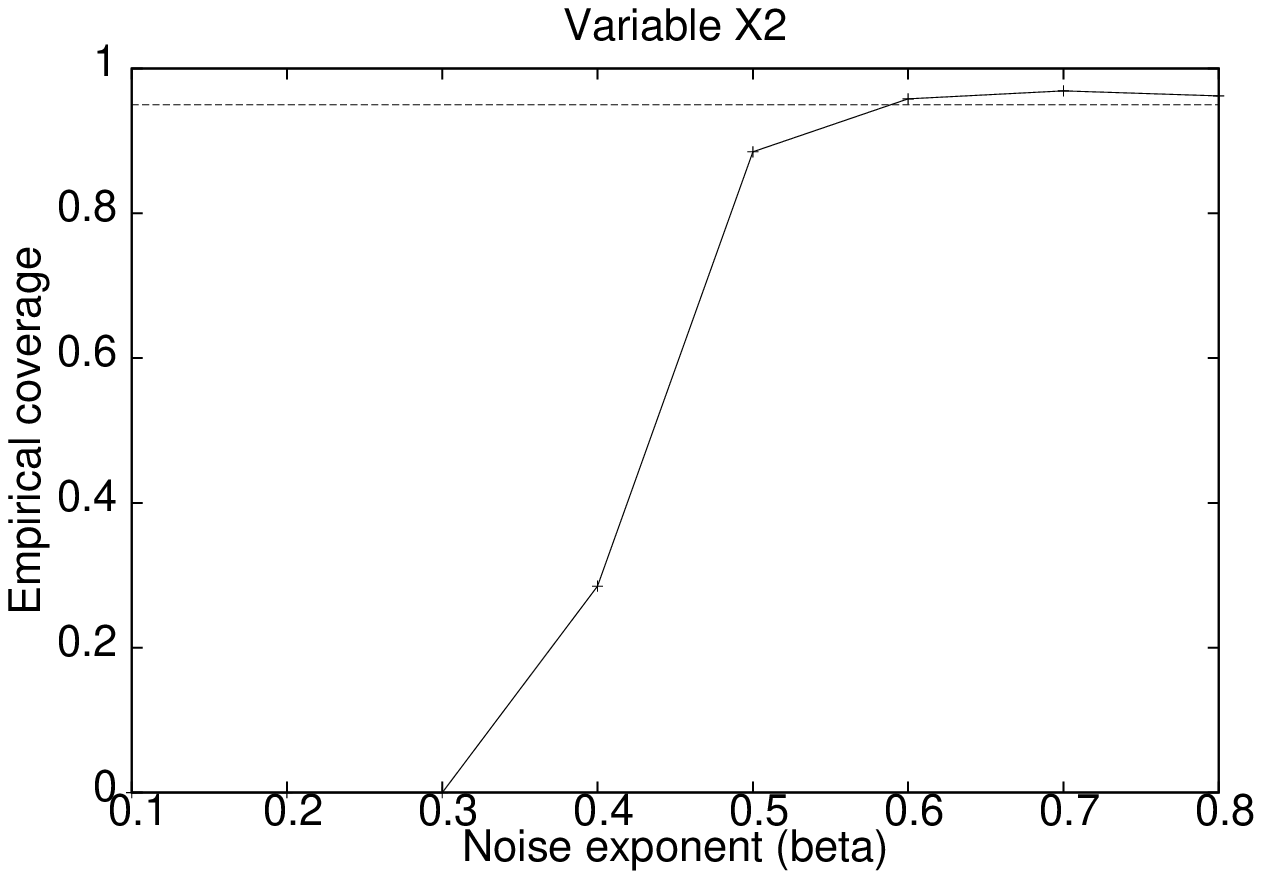}
		\includegraphics[scale=.4]{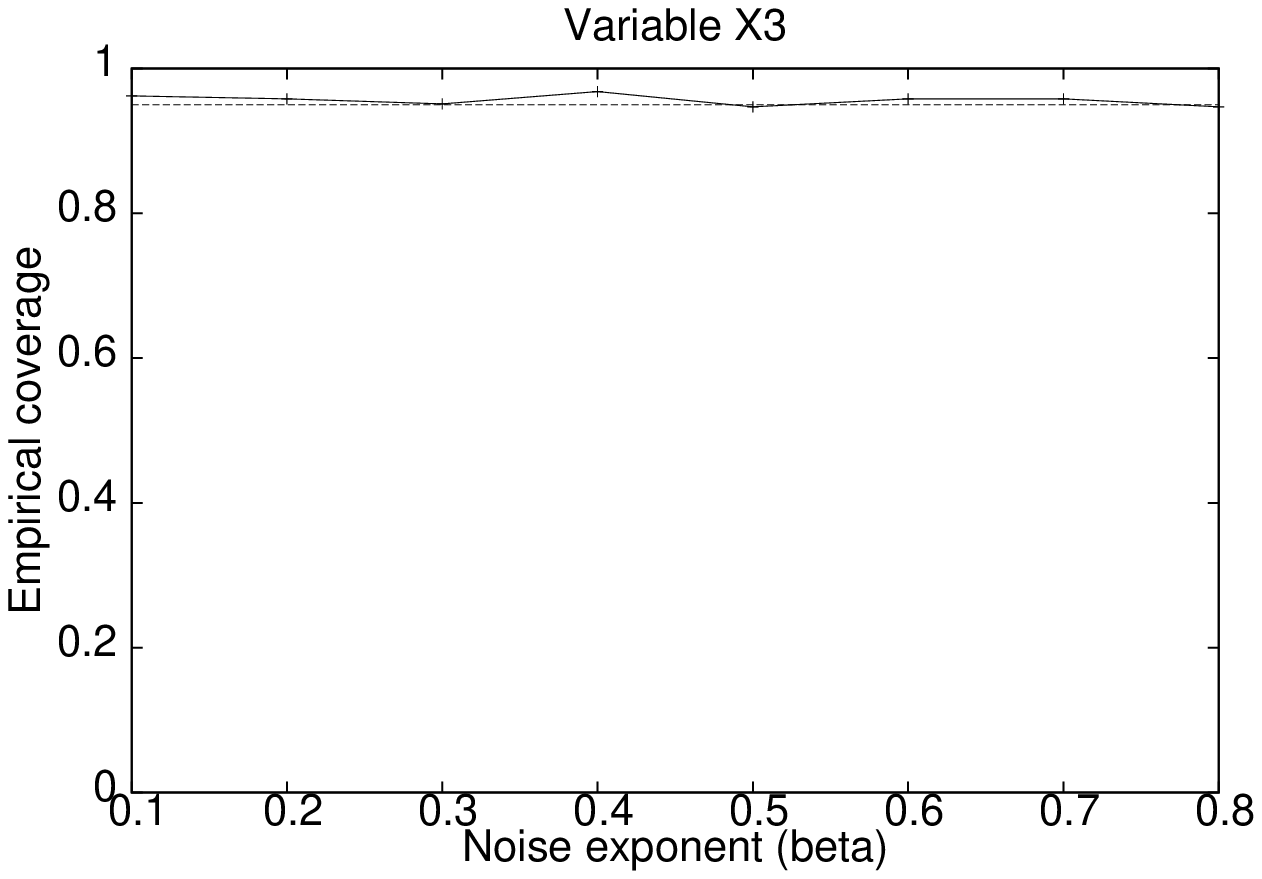}
		\caption{Empirical coverages of the asymptotic confidence intervals for $S^1$, $S^2$ and $S^3$, as a function of $\beta$ (for the Gaussian-perturbated model). }
		\label{f:2}
	\end{center}
\end{figure}

\subsection{Weibull-perturbated model} \label{ss:res3}
We now take a different perturbation of the output:
\[ \widetilde f_N=f+\frac{5 W X_3^2}{N^{\beta/2}} \]
where $W$ is Weibull-distributed with scale parameter $\lambda=1$ and shape parameter $k=1/2$. Here, the perturbation depends on the inputs and, as for every input variable, $C_{\delta,N}$ does not converge to zero, Theorem \ref{cor:as_norm} states in particular that $\widetilde S_N$ is asymptotically normal for $S$ for $\beta>1$. Again, this property is suggested for $N=50000$ by the plot in Figure \ref{f:3}.

\begin{figure}
	\begin{center}
		\includegraphics[scale=.4]{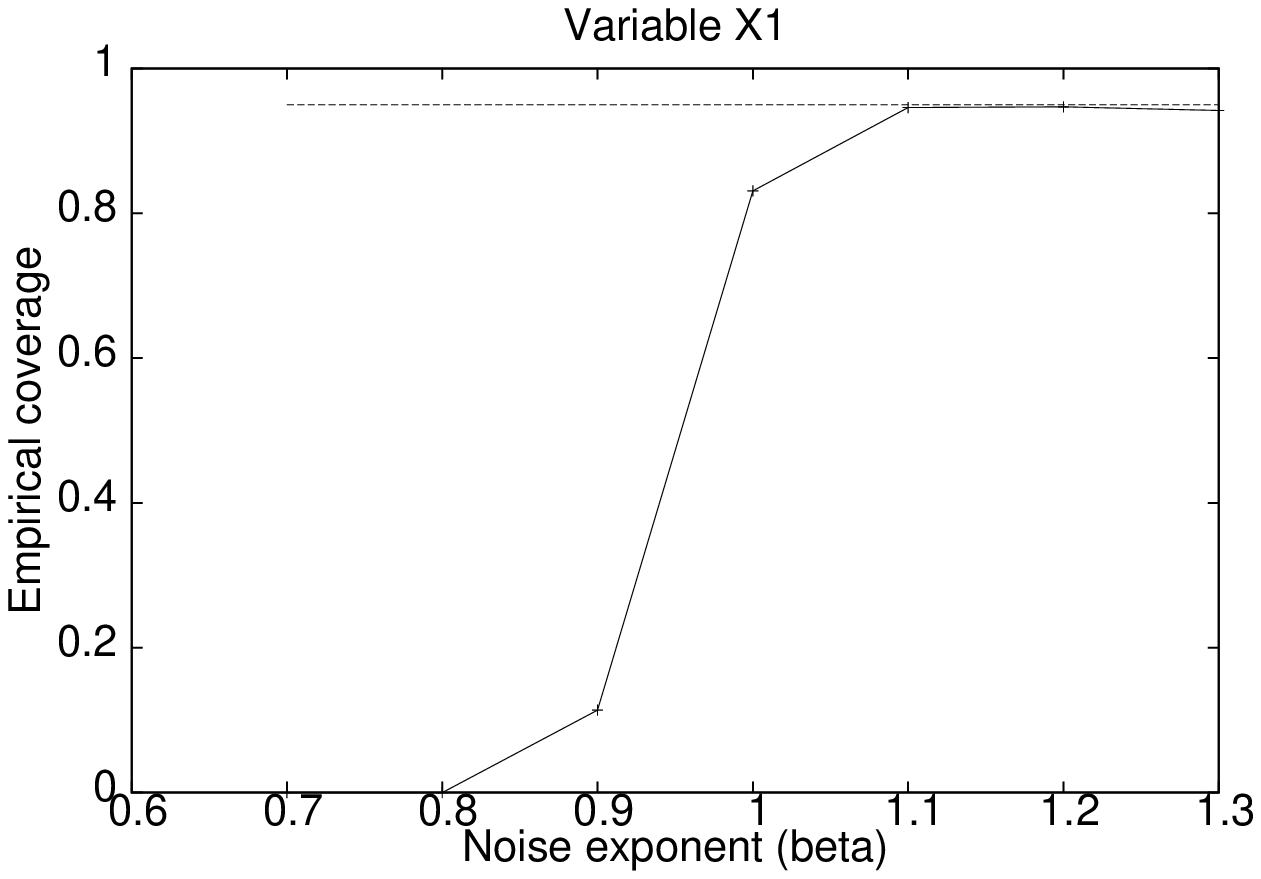}
		\includegraphics[scale=.4]{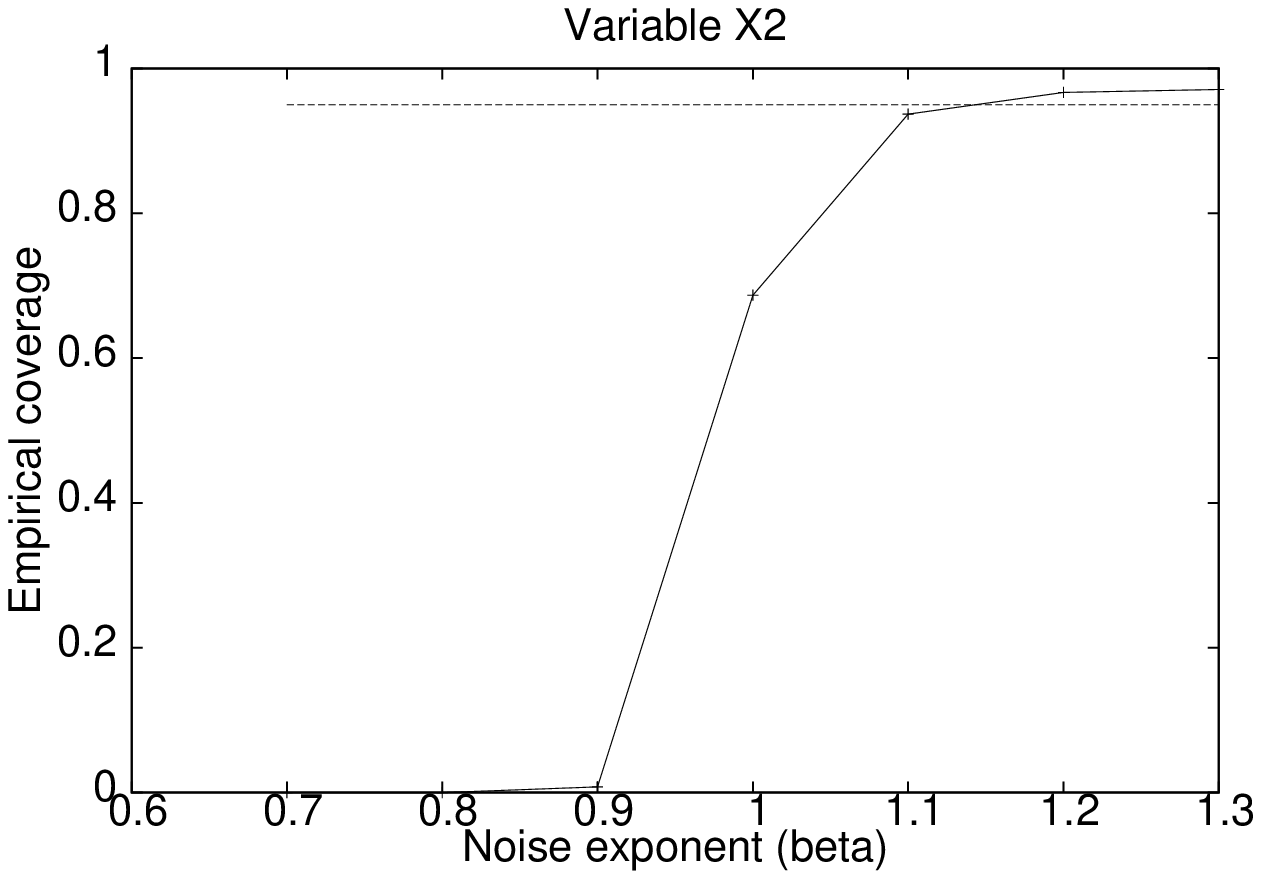}
		\includegraphics[scale=.4]{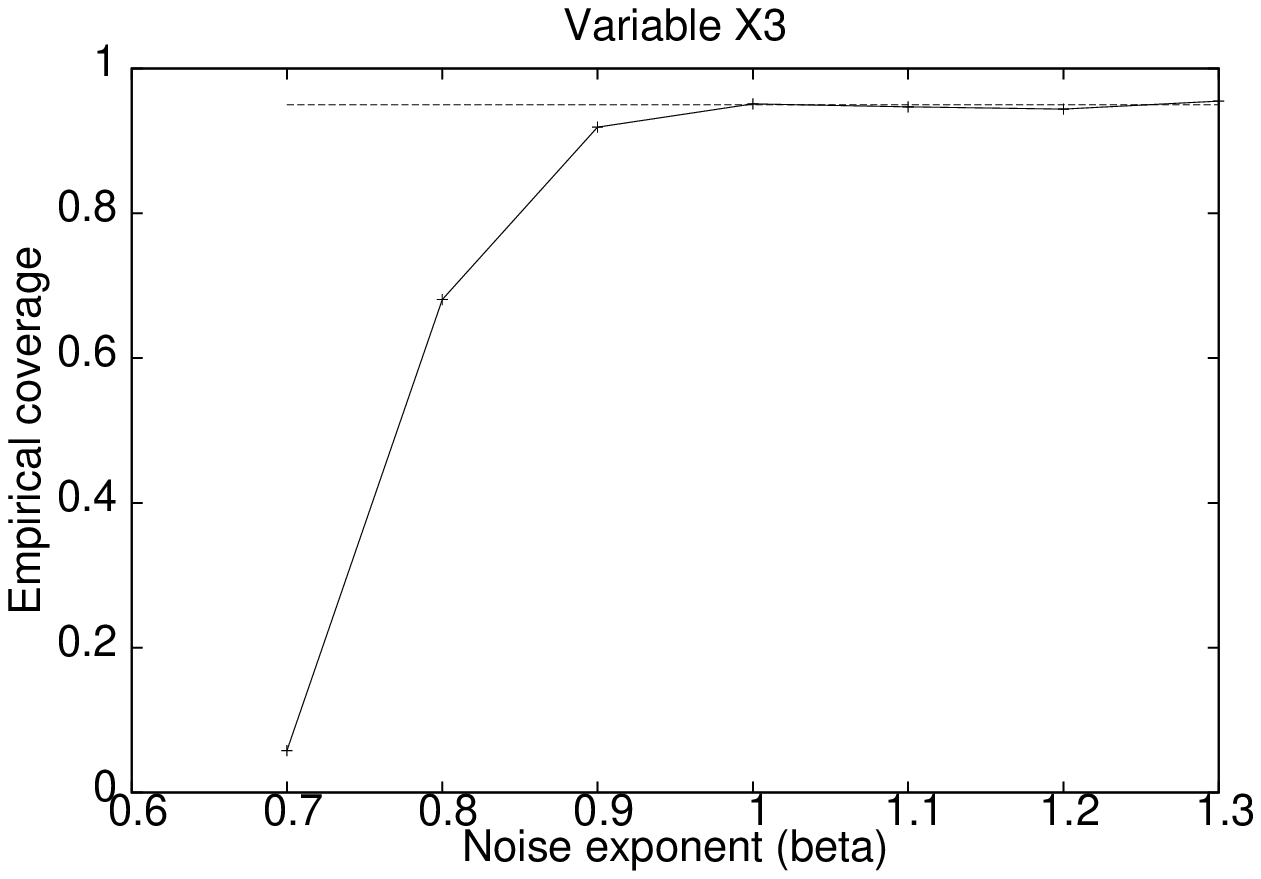}
		\caption{Empirical coverages of the asymptotic confidence intervals for $S^1$, $S^2$ and $S^3$, as a function of $\beta$ (for the Weibull-perturbated model). }
		\label{f:3}
	\end{center}
\end{figure}

\subsection{RKHS metamodel} \label{ss:rkhsmeta}
In this part, we discuss the use of a reproducing kernel Hilbert space (RKHS) interpolator \cite{sant:will:notz:2003,schaback2003mathematical,schaback2011interp} as metamodel $\widetilde f$. Such metamodels (also known as Kriging, or Gaussian process metamodels) are widely used when performing sensitivity analysis of time-expensive computer codes \cite{marrel2009calculations}. Note that, according to \cite{chen2005analytical}, analytical formulae are in some cases (e.g., uniform or gaussian distributions for the inputs) available for Sobol indices computation, avoiding the necessity to use a Monte-Carlo scheme. In this paper, we chose to perform Monte-Carlo estimation on an RKHS metamodel so as to illustrate our theoretical results. Moreover, the Monte-Carlo approach is more flexible and can be applied for complex inputs' distributions. The interpolator depends on a learning sample $\{ (d_1, f(d_1)), \ldots, (d_n, f(d_n)) \}$, where the design points $\mathcal D = \{d_i\}_{i=1,\ldots,n} \subset \mathcal P$ are generally chosen according to a space-filling design, for instance the so-called maximin LHS (latin hypercube sampling) designs. Increasing the learning sample size $n$ will increase the necessary number of evaluations of the true model $f$ (each evaluation being potentially very computationally demanding) to build the learning sample, but will also enhance the quality of the interpolation (i.e. reduce metamodel error).

The error analysis of the RKHS method \cite{schaback2003mathematical,madych1992bounds} shows that there exist positive constants $\mathcal C$ and $\mathcal K$, depending on $f$, so that:
\[ \forall u \in \mathcal P, \;\; \abs{ f(u) - \widetilde f(u) } \leq \mathcal C e^{-\mathcal K/h_{\mathcal D,\mathcal P}} \]
where:
\[ h_{\mathcal D,\mathcal P} = \sup_{u \in \mathcal P} \min_{d \in \mathcal D} \norm{d-u} \]
for a given norm $\norm{\cdot}$ on $\mathcal P$.

The quantity $h_{\mathcal D,\mathcal P}$ can be linked to the number of points $n^*(\epsilon)$ in an optimal covering of $\mathcal D$:
\[ n^*(\epsilon) = \min \{p \in \N^* \;|\; \exists (d_1,\ldots,d_p)\in\mathcal P \text{ s.t. } \forall u \in \mathcal P, \exists i \in \{1,\ldots,p\} \text{ satisfying } \norm{u-d_i}\leq \epsilon \}. \]
In other words, $n^*(\epsilon)$, known as the \emph{covering number of $\mathcal P$}, is the smallest size of a design $\mathcal D$ satisfying $h_{\mathcal D,\mathcal P} \leq \epsilon$.

It is known that, when $\mathcal P$ is a compact subset of $\R^p$ (in our context, $p=p_1+p_2$ is the number of input parameters), there exist constants $A$ and $B$ so that:
\[ A \epsilon^{-p} \leq n^*(\epsilon) \leq B \epsilon^{-p}. \]
Hence, assuming that an optimal design of size $n$ is chosen, we have, for a constant $B'$:
\[ h_{\mathcal D,\mathcal P} \leq B' n^{-1/p} \]
and we have the following pointwise metamodel error bound, for constants $C$ and $K'$:
\[ \forall u \in \mathcal P, \;\; \abs{ f(u) - \widetilde f(u) } \leq \mathcal C e^{-\mathcal K' n^{1/p}} \]
which obviously leads to an integrated error bound on the variance of the metamodel error:
\[ \Var \delta \leq C e^{-k n^{1/p}} \]
for suitable constants $C$ and $k$.

\subsubsection*{Numerical illustration}
We illustrate the properties of the RKHS-based sensitivity analysis using the Ishigami function \eqref{e:ishigam} as true model, maximin LHSes for design points selection. RKHS interpolation also depends on the choice of a kernel, which we choose Gaussian all the way through. All simulations have been made with the \verb<R< software \cite{R2011}, together with the \verb<lhs< package \cite{Rlhs} for design sampling and the \verb<mlegp< package \cite{Rmlegp} for Kriging.

Figure \ref{f:k1}, which shows an estimation (based on a sample of 1000 metamodel errors) of the (logarithm of) variance of metamodel error, plotted against the cubed root of the learning sample size $n^{1/3}$.
\begin{figure}
	\begin{center}
		\includegraphics[scale=.4,angle=270]{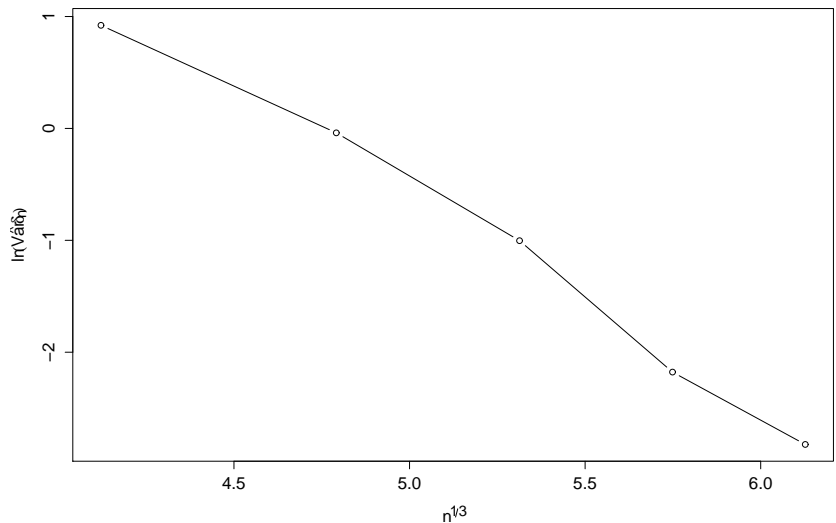}
		\caption{Estimation of the Kriging metamodel error variance (log. scale) as function of the learning sample size $n$. }
		\label{f:k1}
	\end{center}
\end{figure}
Using an exponential regression, we find that:
\begin{equation}\label{e:approxVarDelta} \Var(\delta)\approx \widehat C e^{-\widehat k n^{1/3}} \end{equation}
where:
\[ \widehat k = 1.91 \]
Now, if we let the learning sample size $n$ depend on the Monte-Carlo sample size $N$ by the relation:
\[ n=(a \ln N)^3 \]
for $a>0$, Theorem \ref{cor:as_norm} suggests that the metamodel-based estimators of the sensitivity indices are asymptotically normal if and only if $N^{-a\widehat k+1} \rightarrow 0$ when $N\rightarrow+\infty$, that is $a>\frac{1}{\widehat k}$, or 
        \begin{equation}
        \label{e:acrit}
        a > 0.52,
        \end{equation}according to our numerical value for $\widehat k$.

		  Even if it has not been rigorously proved that this condition is necessary and sufficient (due to the estimation of $k$ and the fact that \eqref{e:approxVarDelta} provably holds, possibly with different constants, as an upper bound), one should observe in practice that the behavior of the empirical confidence intervals for large values of $N$ changes as this critical value of $a$ is crossed.  Table \ref{tbl:1} below shows the results obtained for different subcritical and supercritical values of $a$ (i.e., \eqref{e:acrit} does not hold, or hold, respectively), and provides a clear illustration of this fact. 
\begin{table}
\begin{center}
\begin{tabular}{|c|c|c|c|c|c|}
\hline
$a$ & $N$ & $n$ & Coverage for $S^1$ & Cov. for $S^2$ & Cov. for $S^3$ \\ \hline
.4 & 3000  & 33  &   0.1              &       0        &       0.7       \\ \hline
.4 & 4000  &  37 &   0.08             &       0        &      0.78       \\ \hline
.4 & 6000  & 43  &   0.26             &       0.3      &      0.88       \\ \hline
.4 & 10000 &  51 &   0.28             &       0.18     &      0.78       \\ \hline
.4 & 20000 &  77 &   0.28             &       0.1     &       0.59       \\ \hline
.6 & 3000  & 111 &   0.79             &       0.37     &      0.9        \\ \hline
.6 & 4000  & 124 &   0.8              &       0.7      &      0.94       \\ \hline
.6 & 10000 & 169 &   0.92             &       0.82     &      0.94       \\ \hline
.6 & 20000 & 210 &   0.93             &       0.85     &      0.95       \\ \hline
.7 & 3000  & 177 &   0.93             &       0.88     &      0.93       \\ \hline
.7 & 4000  & 196 &   0.9              &       0.91     &      0.94       \\ \hline
.7 & 6000  & 226 &   0.94             &       0.93     &      0.97       \\ \hline
.8 & 4000  & 293 &   0.95             &       0.95     &      0.95       \\ \hline
\end{tabular}
\caption{Estimation of the asymptotic coverages for the RKHS Ishigami metamodel. Empirical coverages are obtained using 100 confidence interval replicates. Theoretical coverage is 0.95. }
\label{tbl:1}
\end{center}
\end{table}

\subsection{Nonparametric regression} \label{ss:regrmeta}
In this section, we consider the case where the true model $f$ is not directly observable, but is only available through a finite set of \emph{noisy} realisations of:
\[ f_{\text{noisy}}(D_i)=f(D_i)+\epsilon_i, \;\;\; i=1,\ldots,n \]
where $\mathcal D=\left( D_i=(X_i,Z_i) \right)_{i=1,\ldots,n}$ are independent copies of $(X,Z)$, and $\{ \epsilon_i \}_{i=1,\ldots,n}$ are independent, identically distributed centered random variables.

As discussed in Section \ref{s:stationnaire}, one should expect that the Sobol index estimator computed on $f_{\text{noisy}}$ are not asymptotically normal for the estimation of the Sobol indices of $f$ (as $\Var(\epsilon_i)$ is fixed). This motivates the use of a smoothed estimate of $f$, which we will take as our perturbated model $\widetilde f=\widetilde f_{\mathcal D}$. We consider the Nadaraya-Watson estimator:
\[ \widetilde f_{\mathcal D}(u) = \left\{ \begin{array}{l}
			\displaystyle\frac{ \sum_{i=1}^n K_h \left( u-D_i \right) f_{\text{noisy}}(D_i) }{ \sum_{i=1}^n K_h(u-D_i) } \text{ if } \sum_{i=1}^n K_h(u-D_i) \neq 0 \\
			0 \text{ else. } \end{array} \right. \]
where $K_h$ is a smoothing kernel of window $h \in \R^p$; for instance $K_h$ is a Gaussian kernel:
\begin{equation}\label{e:gausskern} K_h(v)=\exp\left( - \sum_{i=1}^{p} \frac{ \norm{v_i}^2 }{ h_i^2 } \right) \end{equation}
where the norm $\norm{\cdot}$ is the Euclidean norm on $\R^p$.

It is known that, under regularity conditions on $f$, and a $n$-dependent appropriate choice of $h$, the mean integrated square error (MISE) of $\widetilde f$ satisfies:
\begin{equation}\label{e:majoint} \int  \E_{\mathcal D}\left(\left( f(u) - \widetilde f_{\mathcal D}(u) \right)^2 \right) \ud u \leq C' n^{-\gamma}, \end{equation}
for a positive constant $C'$ and a positive $\gamma$ (which depends only on the dimension $p$ and the regularity of $f$), and where $\E_{\mathcal D}$ denotes expectation with respect to the random ``design'' $\mathcal D$.

Now, by Fubini-Tonelli's theorem, we have:
\begin{equation} \label{e:fubton} \int  \E_{\mathcal D}\left(\left( f(u) - \widetilde f_{\mathcal D}(u) \right)^2 \right) \ud u = \E_{\mathcal D}\left( \int \left( f(u) - \widetilde f_{\mathcal D}(u) \right)^2  \ud u \right). \end{equation}
By using \eqref{e:fubton}, \eqref{e:majoint} and applying Markov's inequality to the positive random variable $\displaystyle\int \left( f(u)-\widetilde f_{\mathcal D}(u) \right)^2 \ud u$, we have that, for any $\epsilon>0$,
\[ \P\left( \left\{ \mathcal D \;/\; \int \left( f(u)-\widetilde f_{\mathcal D}(u) \right)^2 \ud u \leq \frac{C'}{\epsilon}n^{-\gamma} \right\} \right) \geq 1 - \epsilon. \]
Hence, for a fixed risk $\epsilon>0$, there exist $C>0$ and $\gamma>0$ so that:
\begin{equation}\label{e:majproba} \int \left( \widetilde f_{\mathcal D}(u) - f(u) \right)^2 \ud u \leq C n^{-\gamma} \end{equation}
holds with probability greater than $1-\epsilon$ (with respect to the choice of $\mathcal D$).

We recall that the quantity we have to consider in order to study asymptotic normality of Sobol index estimator on the metamodel is:
\[ \Var(\delta) = \int \left( f(u)-\widetilde f_{\mathcal D}(u) \right)^2 \ud u - \left( \int \left(f(u)-\widetilde f_{\mathcal D}(u)\right) \ud u \right)^2 \]
and that, obviously,
\[ \Var(\delta) \leq  \int \left( f(u)-\widetilde f_{\mathcal D}(u) \right)^2 \ud u. \]
This gives, by making use of \eqref{e:majproba}:
\begin{equation} \label{e:majproba2} \Var\left(\delta\right) \leq C n^{-\gamma} \end{equation}
with probability greater than $1-\epsilon$.

In most cases of application, the design $\mathcal D$ is fixed. In view of \eqref{e:majproba2}, it is reasonable to suppose that there exist $C>0$ and $\beta>0$ so that:
\[ \Var\left(\delta\right) \leq C n^{-\beta} \]
and we make $n$ depend on $N$ by the following relation:
\[ n=N^{a}, \]
for $a>0$. By Theorem \ref{cor:as_norm}, the estimator sequence $\{ \widetilde S_N \}$ is asymptotically normal provided that $N \Var(\delta_N) \rightarrow 0$, that is: $a>\frac{1}{\beta}$.

\subsubsection*{Numerical illustration}
We now illustrate this property using the Ishigami function  \eqref{e:ishigam} as true model, and a Gaussian white noise $\epsilon_i$ of standard deviation $0.3$ (yielding to a signal-to-noise ratio of 90\%).

The nonparametric regressions are carried using a Gaussian kernel \eqref{e:gausskern}, the R package \verb<np< \cite{Rnp2008}, together with the extrapolation method of \cite{racine1993efficient} for window selection and the \verb<FIGtree< \cite{morariu08figtree} C++ library for efficient Nadaraya-Watson evaluation based on fast gaussian transform.

Figure \ref{f:benchNW}, which shows an estimation (based on a test sample of size 3000) of $\Var(\delta)$ in function of $n$, and a power regression shows that:
\[ \Var\left(\delta\right) \approx C n^{-\widehat\beta} \]
with $\widehat\beta=0.86$. This gives an estimate of $1.16$ as the critical $a$ for asymptotic normality.

\begin{figure}
	\begin{center}
		\includegraphics[scale=.4,angle=270]{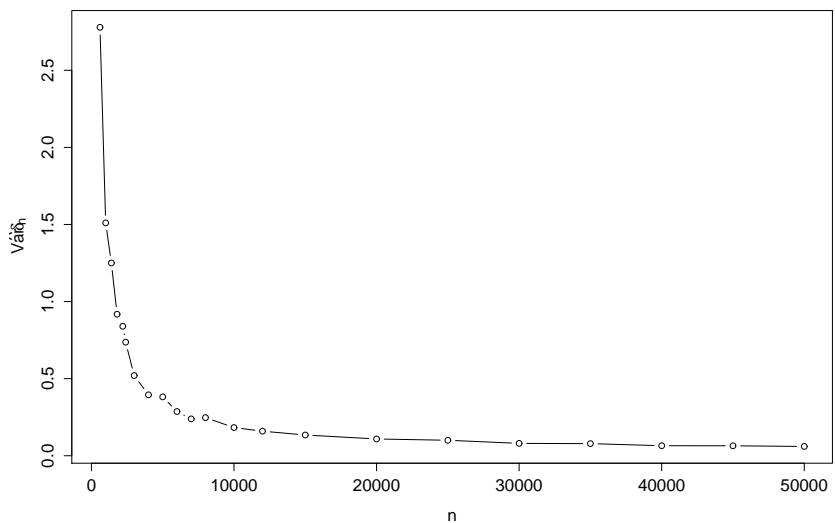}
		\caption{Estimation of the nonparametric regression error variance (log. scale) as function of the learning sample size $n$ (Subsection \ref{ss:regrmeta}). }
		\label{f:benchNW}
	\end{center}
\end{figure}

\begin{table}
\begin{center}
\begin{tabular}{|c|c|c|c|c|c|}
\hline
$a$ & $N$ & $n$ & Coverage for $S^1$ & Cov. for $S^2$ & Cov. for $S^3$ \\ \hline
0.8 & 1000 & 252 & 0.25 & 0.01 & 0.94 \\ \hline
0.8 & 2000 & 438 & 0.05 & 0.02 & 0.86  \\ \hline
1.1 & 1000 & 1996 & 0.95 & 0.97 & 0.96 \\ \hline
1.1 & 2000 & 4277 & 0.95 & 0.93 & 0.96 \\ \hline
1.2 & 1000 & 3982 & 0.93 & 0.95 & 0.96 \\ \hline
1.2 & 2000 & 9147 & 0.96 & 0.97 & 0.95 \\ \hline
1.3 & 1000 & 7944 & 0.95 & 0.99 & 0.94 \\ \hline
1.3 & 2000 & 19559 & 0.95 & 0.95 & 0.96  \\ \hline
\end{tabular}
\caption{Estimation of the asymptotic coverages for the Ishigami nonparametric regression. Empirical coverages are obtained using 100 confidence interval replicates. Theoretical coverage is 0.95. }
\label{tbl:2}
\end{center}
\end{table}

As in the RKHS case, we performed estimations of the coverages of the asymptotic confidence interval for several values of $a$ and $N$; the results are gathered in Table \ref{tbl:2}. We see that, first, the condition $a>1.16$ implies correct coverages, and, second, the condition also seems to be near-necessary to have asymptotic normality. We also remark that, for the asymptotic normality to hold, the necessary number of noisy model evaluations is asymptotically comparable to the Monte-Carlo sample size (while, in the RKHS case, the necessary number of true model evaluations was asymptotically negligible with respect to the Monte-Carlo sample size): this shows that the nonparametric regression is suitable in the case of noisy but abundant model evaluations, while RKHS interpolation is clearly preferable when the true model output is costly to evaluate (i.e. few model outputs are available).

\section{Appendix: Proofs}
\begin{proof}[Proof of Lemma \ref{lemma:cov}]
	On one hand, since $Y \buildrel \mathcal{L} \over = Y^X$ (that is, $Y$ and $Y^X$ have the same distribution), we have 
$$\Cov(Y,Y^X)=\mathbb{E}(YY^X)-\mathbb{E}(Y)\mathbb{E}(Y^X)=\mathbb{E}(YY^X)-\mathbb{E}(Y)^2.$$
On the other hand, $Y$ and $Y^X$ are independent conditionally on $X$, so that
\begin{equation*}
\mathbb{E}(YY^X)=\mathbb{E}(\mathbb{E}(YY^X|X))
= \mathbb{E}(\mathbb{E}(Y|X)\mathbb{E}(Y^X|X))
= \mathbb{E}(\mathbb{E}(Y|X)^2).
\end{equation*}
\end{proof}

\begin{proof}[Proposition \ref{prop:an1}: Proof of \eqref{cv_n_a}] 
	We begin by noticing that $S_N^X$ is invariant by any centering (translation) of the $Y_i$ and $Y_i^X$. To simplify the next calculations, we suppose that they have been recentred by $-\E(Y)$.
	By setting:
	\begin{equation}\label{e:defui} U_i = \left( (Y_i - \E(Y))(Y_i^X - \E(Y)), \quad
			                     Y_i - \E(Y), \quad
										Y_i^X - \E(Y), \quad
										(Y_i-\E(Y))^2 \right)^T, \end{equation}
	this implies that:
	\[ S_N^X = \Psi_S( \overline U_N ) \]
	with:
	\[ \Psi_S (x,y,z,t) = \frac{ x-yz }{ t - y^2 } \]
	The central limit theorem gives that:
	\[ \sqrt N \left( \overline U_N - \mu \right)
       \cvloi
		 \mathcal N_4 \left( 0, \Gamma \right) \]
	 where $\Gamma$ is the covariance matrix of $U_1$ and:
	 \[ \mu=  \begin{pmatrix} \Cov(Y,Y^X) \\
				                                                 0 \\
																				 0 \\
																				 \Var(Y) \end{pmatrix}. \]
	 
	 The so-called Delta method \cite{van2000asymptotic} (Theorem 3.1) gives:
	 \[ \sqrt N \left( S_N^X - S^X \right) \cvloi \mathcal N_1(0, g^T \Gamma g) \]
	 where: 
	 \[ g =  \nabla \Psi_S (\mu).  \]
Note that since by assumption $\Var(Y) \neq 0$, $\Psi_S$ is differentiable at $\mu$ and we will see that $g^T \Gamma g \neq 0$, so that the application of the Delta method is justified. 
	 By differentiation, we get that, for any $x,y,z,t$ so that $t \neq y^2$:
	 \[ 	\nabla \Psi_S ( x, y, z, t) =
		 \left( \frac{1}{t-y^2}, \quad
			      \frac{ - z(t-y^2)+(x-yz)\cdot 2y }{ (t-y^2)^2 } , \quad
					- \frac{y}{t-y^2} , \quad
					- \frac{x-yz}{(t-y^2)^2} \right)^T \]
so that, by using \eqref{sobol_cov}:
   \[ g = \left( \frac{1}{\Var(Y)} , \quad
					                  0 , \quad
											0 , \quad
											- \frac{S^X}{\Var(Y)} \right)^T. \]
Hence 
\begin{eqnarray*}
	g^T \Gamma g &=& \frac{ \Var\left( (Y-\E(Y))(Y^X-\E(Y)) \right) }{ (\Var(Y))^2 }
	    + \frac{ (S^X)^2 }{ (\Var(Y))^2 } \Var \left( (Y-\E(Y))^2 \right) \\
		 &&- 2 \frac{ S^X }{ (\Var(Y))^2 } \Cov \left( (Y-\E(Y))(Y^X-\E(Y)), (Y-\E(Y))^2 \right) \\
		 &=& \frac{1}{(\Var(Y))^2} \Biggl( \Var\left( (Y-\E(Y))(Y^X-\E(Y)) \right) + \Var\left( S^X \left( (Y-\E(Y))^2 \right) \right)\\
			&& - 2 \Cov\left( (Y-\E(Y))(Y^X-\E(Y)), S^X (Y-\E(Y))^2 \right) \Biggr) \\
		 &=& \frac{ \Var\left( (Y-\E(Y))\bigl[ (Y^X-\E(Y))-S^X (Y-\E(Y)) \bigr] \right) }{(\Var(Y))^2},
\end{eqnarray*}
which is the announced result.

	\emph{Proof of \eqref{cv_n_a_2}. } As in the previous point, it is easy to check that $T_N^X$ is invariant with respect to translations of $Y_i$ and $Y_i^X$ by $-\E(Y)$. Thus, $T_N^X=\Psi\left( \overline W_N \right)$ with:
\[ \Psi_T(x,y,z)=\frac{ x-(y/2)^2 }{ z/2 - (y/2)^2 } \]
and:
\begin{equation}\label{e:defwi}  W_i = \left( (Y_i-\E(Y))(Y_i^X-\E(Y)), \quad (Y_i-\E(Y))+(Y_i^X-\E(Y)), \quad
		 (Y_i-\E(Y))^2 + (Y_i^X-\E(Y))^2 \right)^T. \end{equation}
The result follows from the delta method.

\end{proof}

\begin{proof}[Proof of Proposition \ref{prop:varless}]
	We have that the expressions in \eqref{cv_n_a} and \eqref{cv_n_a_2} of $\sigma_S^2$ and $\sigma_T^2$ 	are translation-invariant, so that we assume without loss of generality that $\E(Y)=0$. By expanding the variances and using the exchangeability of $Y$ and $Y^X$, we have that $(\Var(Y))^2  \left(\sigma_S^2 - \sigma_T^2\right)$ is equal to:
	\begin{equation*} 
	(\Var(Y))^2  \left(\sigma_S^2 - \sigma_T^2\right)  
	= \frac{(S^X)^2}{2} \left( \Var(Y^2) - \Cov\left(Y^2, (Y^X)^2\right) \right).
\end{equation*}

We now use Cauchy-Schwarz inequality to see that:
\[ \Cov \left( Y^2, (Y^X)^2 \right) \leq \sqrt{ \Var\left(Y^2\right) \Var\left (Y^X)^2\right) } = \Var\left(Y^2\right) \]
so the second term is always non-negative. This proves that the asymptotic variance of $S_N^X$ is greater than the asymptotic variance of $T_N^X$.

For the equality case, we notice that $S^X=0$ implies the equality of the asymptotic variances. If $S^X \neq 0$, equality holds if and only if there is equality in Cauchy-Schwarz, ie. there exists $k \in \R$ so that:
\[ Y^2 = k (Y^X)^2 \text{ almost surely} \]
by taking expectations and using $\Var(Y)=\Var(Y^X)$ we see that $k=1$ necessarily, hence $Y=Y^X$ almost surely, and $S^X=1$ thanks to \eqref{sobol_cov}. 
\end{proof}

\begin{proof}[Proof of Lemma \ref{lemm:eff}]
Let, for $g  \in L^2(P)$ and $t\in\R$, $P_t^g$ be the cdf satisfying:
\[ \ud P_t^g = (1+t g) \ud P. \]
It is clear that the tangent set of $\mathcal P$ at $P$ is the closure of:
\[ \dot{\mathcal P_P} = \{ g 
\text{  bounded, } \E(g(Y,Y^X))=0 \text{ and } g(a,b)=g(b,a)\; \forall (a,b)\in\R^2 \}. \]

Let, for $Q \in \mathcal P$:
\[ \Psi_1(Q) = \E_Q \left(\Phi_1(Y)\right) \quad \text{ and } \quad \Psi_2(Q) = \E_Q \left( \Phi_2(Y,Y^X) \right). \]
We recall that $\E_Q$ denotes the expectation obtained by assuming that the random vector $(Y,Y^X)$ follows the $Q$ distribution.

Following \cite{van2000asymptotic} Section 25.3, we compute the efficient influence functions of $\Psi_1$ and $\Psi_2$ with respect to $\mathcal P$ and the tangent set $\dot{\mathcal P_P}$. These empirical influence functions are related to the minimal asymptotic variance of a regular estimator sequence whose observations lie in $\mathcal P$ (op.cit., Theorems 25.20 and 25.21).  Let $g \in \dot{\mathcal P_P}$. 

\begin{enumerate}
\item We have
\begin{eqnarray*}
	\frac{\Psi_1(P_t^g)-\Psi_1(P)}{t} &=& \E_P \left( \Phi_1(Y) g(Y,Y^X) \right) \\
	&=& \E_P \left[ \left( \frac{\Phi_1(Y)+\Phi_1(Y^X)}{2} - \E(\Phi_1(Y)) \right)  g(Y,Y^X). \right] 
\end{eqnarray*}
As:
\[ \widetilde {\Psi_{1,P}} = \frac{\Phi_1(Y)+\Phi_1(Y^X)}{2} - \E(\Phi_1(Y)) \in \dot{\mathcal P_P}, \]
 it is the efficient influence function of $\Psi_1$ at $P$. Hence the efficient asymptotic variance is:
 \[ \E_P\left(\left(\widetilde{\Psi_{1,P}}\right)^2\right) = \frac{ \Var\left( \Phi_1(Y)+\Phi_1(Y^X) \right) }{ 4 }. \]
As, by the central limit theorem, $\left\{ \Phi_N^1 \right\}$ clearly achieves this efficient asymptotic variance, it is an asymptotically efficient estimator of $\Psi_1(P)$.

\item We have:
\begin{eqnarray*}
	\frac{\Psi_2(P_t^g)-\Psi_2(P)}{t} &=& \E_P \left( \Phi_2(Y,Y^X) g(Y,Y^X) \right) \\
	&=& \E_P \left[ \left( \Phi_2(Y,Y^X)-\E(\Phi_2(Y,Y^X)) \right) g(Y,Y^X) \right].
\end{eqnarray*}
Thanks to the symmetry of $\Phi_2$, we have that 
\[ \widetilde{\Psi_{2,P}} =  \Phi_2(Y,Y^X)-\E(\Phi_2(Y,Y^X)) \]
belongs to $\dot{\mathcal P_P}$, hence it is the efficient influence function of $\Psi_2$. So the efficient asymptotic variance is:
\[ \E_P\left(\left(\widetilde{\Psi_{2,P}}\right)^2\right) = \Var\left( \Phi_2(Y,Y^X) \right), \]
and this variance is achieved by $\left\{ \Phi_N^2 \right\}$.\hfill \qedhere
\end{enumerate}
\end{proof}

\begin{proof}[Proof of Proposition \ref{prop:ae}]
By Lemma \ref{lemm:eff}, we get that:
\begin{equation}\label{e:defun}
 U_N = \left( \frac{1}{N} \sum_{i=1}^N Y_i Y_i^X, \quad
  \frac{1}{N} \sum_{i=1}^N \frac{Y_i + Y_i^X}{2}, \quad
  \frac{1}{N} \sum_{i=1}^N \frac{Y_i^2+(Y_i^X)^2}{2} \right) \end{equation}
is asymptotically efficient, componentwise, for estimating 
\begin{equation}\label{e:defu} U = \left( \E(YY^X), \quad \E(Y), \quad \E(Y^2) \right) \end{equation}
in $\mathcal P$.

Using Theorem 25.50 (efficiency in product space) of \cite{van2000asymptotic}, we can deduce joint efficiency from this componentwise efficiency.

Now, let $\Psi$ be the function defined by:
\[ \Psi(x,y,z)=\frac{x-y^2}{z-y^2} \]
and $\Psi$ is differentiable on:
\[ \R^3 \setminus \left\{(x,y,z)\, \big|\, z \neq y^2 \right\}, \]
Theorem 25.47 (efficiency and Delta method) of \cite{van2000asymptotic} implies that $\left\{ \Psi\left( U_N\right) \right \}$ is asymptotically efficient for estimating $\Psi(U)$ for $P\in\mathcal P$. The conclusion follows, as $\Psi(U_N)=T_N^X$ and $\Psi(U)=S^X$.
\end{proof}

\begin{proof}[Proof of Proposition \ref{p:propAA}]
We clearly have that $\widetilde Y_N \overset{L^2}{\underset{N\rightarrow+\infty}{\longrightarrow}} Y+c$.

We deduce that:
\[ \Var\left( \widetilde Y_N \right) \underset{N\rightarrow+\infty}{\longrightarrow} \Var(Y+c)=\Var(Y) \]
and
\[ \E(\widetilde Y_N|Z) \underset{N\rightarrow+\infty}{\longrightarrow} \E(Y|Z)+c \text{ in } L^2. \]
From this last convergence we get \[ \Var \left(\E(\widetilde Y_N|Z)\right) \underset{N\rightarrow+\infty}{\longrightarrow} \Var\left(\E(Y|Z)\right) .\] 
	This proves that $\widetilde S^X=\Var\left(\E(\widetilde Y_N|Z)\right)/\Var\left(\widetilde Y_N\right)$ converges to $S^X=\Var\left(\E(Y|Z)\right)/\Var(Y)$ when $N$ goes to $+\infty$.
\end{proof}

\begin{proof}[Proof of Proposition \ref{prop:as_norm}]
\textbf{Proof of \eqref{cv_n_aa}. }
Let
\[ \widetilde{U}_{N,i}=\left( (\widetilde Y_{N,i}-\E(Y)) (\widetilde Y_{N,i}^X-\E(Y)), \widetilde Y_{N,i}-\E(Y), \widetilde Y_{N,i}^X-\E(Y), \left( \widetilde Y_{N,i}-\E(Y) \right)^2 \right) \]
and
\[\overline{\widetilde{U}}_{N}: = \frac{1}{N}\sum_{i=1}^N \widetilde{U}_{N,i}.\]

Using the Lindeberg-Feller central limit theorem (see e.g. \cite{van2000asymptotic} 2.27, with $Y_{N,i}=\widetilde{U}_{N,i}/\sqrt N$), we get:
\[ \sqrt N \left( \overline{\widetilde{U}}_{N} - \E\left(\widetilde{U}_{N,1}\right) \right)
\cvloi
 \mathcal N_4(0, \Gamma) \]
where $\Gamma$ is the covariance matrix of the $U_1$ vector defined in \eqref{e:defui}.

The use of this central limit theorem is justified by the fact that, under assumption  \eqref{e:ass4ps} of uniform boundedness of moments of $\widetilde Y_N$, there are $s'>0$ and $C'$ such that:
\[ \forall N,\;\; \E(||U_{N,i}||^{2+s'}) < C' \]
where $\norm{\cdot}$ is the standard Euclidean norm.

This ensures
\[ \forall \epsilon>0, \;\; \E(||\tilde U_{N,i}||^2 \mathbf{1}_{||\tilde U_{N,i}||>\epsilon\sqrt N}) \rightarrow 0. \]
Then
\[ \E( ||\tilde U_{N,i}||^2 \mathbf{1}_{||\tilde U_{N,i}||>\epsilon\sqrt N} ) =
	\E \left( \frac{||\tilde U_{N,i}||^{2+s'}}{||\tilde U_{N,i}||^{s'}} \mathbf{1}_{||\tilde U_{N,i}||>\epsilon\sqrt N} \right) \leq \frac{C'}{\epsilon^{s'}N^{s'/2} }. \]

This shows that for each $i$, $\left\{ \norm{\tilde U_{N,i}}^2 \right\}_N$ is uniformly integrable, hence, the variance-covariance matrix of $\tilde U_{N,i}$ converges to $\Gamma$ when $N\rightarrow+\infty$. As $\widetilde U_{N,i} \overset{\mathbb P}{\underset{N\rightarrow+\infty}{\longrightarrow}} U_i$, the same convergence holds in $L^2$ and the covariance matrices of $\widetilde U_{N,i}$ converge (as $N \rightarrow +\infty$) to $\Gamma$, the covariance matrix of $U_i$.

We conclude the proof by applying the Delta method as for the exact model (cf. the proof of Proposition \ref{prop:an1}). 

\textbf{Proof of \eqref{cv_n_aa2}. } We set:
\[ \widetilde{W}_{N,i} = \left( (\widetilde Y_i-\E(Y))(\widetilde Y_i^X-\E(Y)), \quad (\widetilde Y_i-\E(Y))+(\widetilde Y_i^X-\E(Y)), \quad
		 (\widetilde Y_i-\E(Y))^2 + (\widetilde Y_i^X-\E(Y))^2 \right)^T. \]
As in the previous point, the Lindeberg-Feller theorem can be applied to $\left\{\widetilde W_{N,i}\right\}$ to yield the convergence:
\[ \sqrt N \left( \overline{\widetilde{W}}_{N} - \E\left(\widetilde{W}_{1,1}\right) \right)
\cvloi
 \mathcal N_3(0, \Sigma) \]
where $\Sigma$ is the covariance matrix of $W_1$ defined in \eqref{e:defwi}. The conclusion follows again by an application of the Delta method as in the proof of Proposition \ref{prop:an1}.
\end{proof}

\begin{proof}[Proof of Theorem \ref{cor:as_norm}]
The following decompositions:
\begin{equation}\label{eq:decomp}
\sqrt{N}(\widetilde S_N^X-S^X) = \sqrt N (\widetilde S_N^X - \widetilde S^X) + \sqrt N (\widetilde S^X - S^X )
\end{equation}
\begin{equation}\label{eq:decomp2}
\sqrt{N}(\widetilde T_N^X-S^X) = \sqrt N (\widetilde T_N^X - \widetilde S^X) + \sqrt N (\widetilde S^X - S^X )
\end{equation}
make obvious that if $\sqrt N (\widetilde S^X - S^X)$ goes to some constant $\kappa$ then
\[ \sqrt{N}(\widetilde S_N- S) \overset{\mathcal L}{\underset{N\rightarrow+\infty}{\longrightarrow}} \mathcal N (\kappa, \sigma_S^2)\]
and:
\[ \sqrt{N}(\widetilde T_N- S) \overset{\mathcal L}{\underset{N\rightarrow+\infty}{\longrightarrow}} \mathcal N (\kappa, \sigma_T^2).\]

The second point of the theorem is now clear from the proof of Proposition \ref{prop:pasconsistant}. 

The remaining of the theorem is an immediate consequence of Lemma \ref{lemma100} below.
\end{proof}

\begin{lemma}\label{lemma100}
	We have:
\[ \sqrt N \left(\widetilde {S}^X-S^X\right) =\frac{O \left( \left(N \Var(\delta_N)\right)^{1/2} \right)}{\Var(Y)+o(1)}. \]
\end{lemma}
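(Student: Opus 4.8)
The plan is to reduce the difference $\widetilde S^X - S^X$ to an exact algebraic expression in the covariances of $Y$, $Y^X$, $\delta_N$ and $\delta^X_N$, and then to recognize the resulting combination as the quantity $C_{\delta,N}$ introduced in Theorem \ref{cor:as_norm}. First I would apply Lemma \ref{lemma:cov} to the metamodel to write $\widetilde S^X = \Cov(\widetilde Y,\widetilde Y^X)/\Var(\widetilde Y)$, just as $S^X = \Cov(Y,Y^X)/\Var(Y)$. Writing $\widetilde Y = Y+\delta_N$ and $\widetilde Y^X = Y^X+\delta^X_N$ and expanding by bilinearity gives
\[ \Cov(\widetilde Y,\widetilde Y^X) = \Cov(Y,Y^X) + \Cov(Y,\delta^X_N) + \Cov(\delta_N,Y^X) + \Cov(\delta_N,\delta^X_N), \]
\[ \Var(\widetilde Y) = \Var(Y) + 2\Cov(Y,\delta_N) + \Var(\delta_N). \]

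Next I would put the difference over the common denominator $\Var(\widetilde Y)\Var(Y)$, so that the numerator is $\Cov(\widetilde Y,\widetilde Y^X)\Var(Y) - \Cov(Y,Y^X)\Var(\widetilde Y)$. The unperturbed product $\Cov(Y,Y^X)\Var(Y)$ cancels, leaving only terms at least linear in the perturbation. Substituting $\Cov(Y,Y^X) = S^X\Var(Y)$, the numerator factors as $\Var(Y)$ times the bracket
\[ \Cov(Y,\delta^X_N) + \Cov(\delta_N,Y^X) + \Cov(\delta_N,\delta^X_N) - 2S^X\Cov(Y,\delta_N) - S^X\Var(\delta_N). \]

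The key step is to rewrite this bracket in terms of correlations. By the same exchangeability argument as in Lemma \ref{lemm:exch}, applied to the enlarged vector obtained by swapping the roles of the picked variables, one has $\Cov(Y,\delta^X_N) = \Cov(\delta_N,Y^X)$ and $\Var(\delta^X_N) = \Var(\delta_N)$. Converting each covariance into a correlation times the product of the two standard deviations and factoring out $\Var(\delta_N)^{1/2}$, the bracket collapses to exactly $\Var(\delta_N)^{1/2}\,C_{\delta,N}$, since $\Corr(Y,Y^X)=S^X$. Substituting back, the factor $\Var(Y)$ in the numerator cancels the one in the denominator, yielding the exact identity
\[ \sqrt N\bigl(\widetilde S^X - S^X\bigr) = \frac{(N\Var(\delta_N))^{1/2}\,C_{\delta,N}}{\Var(\widetilde Y)}. \]

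Finally I would control the two remaining quantities. Since correlations are bounded by $1$ in absolute value and $\Var(\delta_N)^{1/2}\to 0$ under the standing assumption $\delta_N \to c$ in $L^2$, the expression defining $C_{\delta,N}$ is bounded, so $C_{\delta,N} = O(1)$ and the numerator is $O((N\Var(\delta_N))^{1/2})$. The same $L^2$ convergence gives $\Var(\delta_N)\to 0$ and $\Cov(Y,\delta_N)\to 0$, whence $\Var(\widetilde Y) = \Var(Y) + o(1)$, which is the announced denominator. I expect the only genuine obstacle to be the exchangeability bookkeeping that forces the two mixed covariances to coincide and makes the combination collapse precisely onto $C_{\delta,N}$; the remaining steps are routine bilinear expansion and the elementary bound on $C_{\delta,N}$.
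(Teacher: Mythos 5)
Your proof is correct and follows essentially the same route as the paper's: both write $\widetilde S^X=\Cov(\widetilde Y,\widetilde Y^X)/\Var(\widetilde Y)$ and $S^X=\Cov(Y,Y^X)/\Var(Y)$, expand by bilinearity, put the difference over the common denominator so the unperturbed term cancels, identify the numerator as $\Var(Y)\,\Var(\delta_N)^{1/2}C_{\delta,N}$, and conclude from the boundedness of $C_{\delta,N}$ together with $\Var(\widetilde Y)=\Var(Y)+o(1)$. The only cosmetic difference is that you spell out the exchangeability step $\Cov(Y,\delta^X_N)=\Cov(\delta_N,Y^X)$ and the correlation bookkeeping, which the paper uses silently when it writes the expansion directly with the term $2\Cov(Y,\delta^X_N)$.
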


\begin{proof}[Proof of \ref{lemma100}]
We have:
\begin{eqnarray*}
	\widetilde S^X-S^X &=& \frac{\Cov(\widetilde Y_N,\widetilde Y^X_N)}{\Var \widetilde Y_N} - \frac{\Cov(Y,Y^X)}{\Var(Y)} \\
	&=& \frac{\Cov(Y,Y^X)+2\Cov(Y,\delta^X_N)+\Cov(\delta_N,\delta^X_N)}{\Var(Y)+2\Cov(Y,\delta_N)+\Var(\delta_N)} - \frac{\Cov(Y,Y^X)}{\Var(Y)}\\
	&=& \frac{\Var(Y)\left(2\Cov(Y,\delta^X_N)+\Cov(\delta_N,\delta^X_N)\right)-\Cov(Y,Y^X)\left(2\Cov(Y,\delta_N)+\Var(\delta_N)\right)}{\Var(Y)\left(\Var(Y)+2\Cov(Y,\delta_N)+\Var(\delta_N)\right)}\\
	&=&\frac{\Var(\delta_N)^{1/2}C_{\delta_N}}{\Var(Y)+2\Cov(Y,\delta_N)+\Var(\delta_N)}
\end{eqnarray*}
and:
\[ \Var(Y)+2\Cov(Y,\delta_N)+\Var(\delta_N) = \Var(Y) + o(1). \]

Finally, $C_{\delta,N}$ is uniformly bounded because $\Var(\delta_N)$ goes to 0 and $\Var(Y)$ is a constant.
\end{proof}

\begin{proof}[Proof of Proposition \ref{prop:aem}]
We will use the following lemma.
\begin{lemma}
	For all $N\in\N^*$, let $(Z_{N,i})_{i=1,\ldots,N}$ be a sequence of i.i.d variables such that
	\begin{enumerate}
		\item $\sqrt N \E(Z_{N,i}) \underset{N\rightarrow+\infty}{\longrightarrow} 0$;
		\item $\Var (Z_{N,i}) \underset{N\rightarrow+\infty}{\longrightarrow} 0$.
	\end{enumerate}
Then
	\[ \frac{1}{\sqrt N} \sum_{i=1}^N Z_{N,i} \overset{\mathbb{P}}{\underset{N\rightarrow+\infty}{\longrightarrow}} 0. \]
\end{lemma}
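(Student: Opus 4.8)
The plan is to split the normalized sum into its deterministic mean contribution and a centered fluctuation, and to control each piece by an elementary moment argument. No central limit theorem is needed here: a second-moment (Chebyshev) estimate suffices, and it is robust to the fact that we are dealing with a triangular array whose common law changes with $N$.

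Writing $m_N=\E(Z_{N,1})$ and $\sigma_N^2=\Var(Z_{N,1})$, I would first decompose
\[ \frac{1}{\sqrt N}\sum_{i=1}^N Z_{N,i} = \frac{1}{\sqrt N}\sum_{i=1}^N \left( Z_{N,i}-m_N \right) + \sqrt N\, m_N. \]
The second, purely deterministic term equals $\sqrt N\,\E(Z_{N,1})$ and tends to $0$ directly by hypothesis (1), so it is negligible.

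For the centered first term I would compute its second moment. It is centered, so its expectation is $0$, and by the independence of the $(Z_{N,i})_{i=1,\ldots,N}$ within each fixed row all cross-covariances vanish, whence
\[ \E\left( \left( \frac{1}{\sqrt N}\sum_{i=1}^N \left( Z_{N,i}-m_N \right) \right)^2 \right) = \frac{1}{N}\sum_{i=1}^N \Var(Z_{N,i}) = \sigma_N^2, \]
which converges to $0$ by hypothesis (2). Hence the centered term converges to $0$ in $L^2$, and therefore in probability by Markov's (Chebyshev's) inequality. Combining the two pieces (triangle inequality, or Slutsky) yields the claimed convergence $\frac{1}{\sqrt N}\sum_{i=1}^N Z_{N,i}\overset{\mathbb{P}}{\underset{N\rightarrow+\infty}{\longrightarrow}}0$.

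The only point requiring a little care — and the reason this is isolated as a separate lemma rather than deduced from the classical law of large numbers — is that $(Z_{N,i})$ forms a \emph{triangular array}: the distribution of the summands depends on $N$, which rules out a direct appeal to the strong law. The explicit variance identity above sidesteps this entirely, since only independence within each fixed row is used. Consequently I expect no genuine obstacle beyond this bookkeeping observation.
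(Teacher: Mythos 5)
Your proof is correct and follows exactly the paper's own route: the same decomposition into the centered fluctuation $\sqrt N\bigl(\frac{1}{N}\sum_{i=1}^N Z_{N,i}-\E(Z_{N,1})\bigr)$ plus the deterministic term $\sqrt N\,\E(Z_{N,1})$, with the variance computation and Chebyshev's inequality that you spell out being precisely the details the paper leaves implicit after stating that decomposition.
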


The lemma follows after the following decomposition:
\[ \frac{1}{\sqrt N} \sum_{i=1}^N Z_{N,i} = \sqrt N \left( \frac{1}{N} \sum_{i=1}^N Z_{N,i} - \E(Z_{N,1}) \right) + \sqrt N \E(Z_{N,1}). \]

Let $U_N$ and $U$ be the vectors defined in the proof of Proposition \ref{prop:ae}, in \eqref{e:defun} and \eqref{e:defu}, respectively, and:
\[ \widetilde U_N = \left( \frac{1}{N} \sum_{i=1}^N \widetilde Y_i \widetilde Y_i^X, \quad
  \frac{1}{N} \sum_{i=1}^N \frac{\widetilde Y_i + \widetilde Y_i^X}{2}, \quad
  \frac{1}{N} \sum_{i=1}^N \frac{\widetilde Y_i^2+(\widetilde Y_i^X)^2}{2} \right). \]
We will show that:
\begin{equation}
\label{e:conve}
\sqrt{N}\left( U_N - \widetilde U_N \right)\overset{\mathbb{P}}{\underset{N\to\infty}{\rightarrow}} 0.
\end{equation}
By Theorem 25.23 of \cite{van2000asymptotic} and the fact that $\left(U_N\right)$ is asymptotically efficient for $U$ (shown in the proof of Proposition \ref{prop:ae}), this implies that $ \left(\widetilde U_N \right) $ is asymptotically efficient for $U$, and the end of the proof of Proposition \ref{prop:ae} shows the announced result.

To prove \eqref{e:conve}, it is sufficient to prove componentwise convergence. We will treat the second and the third components, as the result holds in the same way for the other. 

For the second component, we have 
\[ \frac{1}{\sqrt N} \sum_{i=1}^N (\widetilde Y_{N,i} - Y_i) = \frac{1}{\sqrt N} \sum_{i=1}^N \delta_{N,i} \]
goes to 0 (in probability) by the previous lemma. The same holds for $ \frac{1}{\sqrt N} \sum_{i=1}^N (\widetilde Y_{N,i}^X - Y_i^X) $.

For the third component, we have 
\[  \frac{1}{\sqrt N} \sum_{i=1}^N (\widetilde Y_{N,i}^2 - Y_i^2) = 2 \frac{1}{\sqrt N} \sum_{i=1}^N \delta_{N,i} Y_i + \frac{1}{\sqrt N} \sum_{i=1}^N \delta_{N,i}^2. \]
Now by assumption,
\[ \sqrt N \E(\delta_{N,i} Y_i)\leq\sqrt{N \E(\delta_{N,i}^2)\E(Y_i^2)}=\sqrt{N (\Var(\delta_{N,i})+\E(\delta_{N,i})^2) \E(Y_i^2)}\rightarrow 0, \]  
and by Cauchy-Schwarz inequality,
\[ \Var(\delta_{N,i} Y_i) = \E(\delta_{N,i}^2 Y_i^2) - ( \E(\delta_{N,i} Y_i) )^2 \leq \sqrt{ \E(\delta_{N,i}^4) \E(Y_i^4) } + \E(\delta_{N,i}^2)\E(Y_i^2)
\leq C\E(\delta_N^4)^{1/2}. \]

By assumption, for all $i$, $\delta_{N,i} \overset{\mathbb{P}}{\underset{N \rightarrow +\infty}\longrightarrow} 0$. Hence, the same convergence holds about $\delta_{N,i}^4$. Since $\delta_N$ is in $L^{4+s}$, then  $\left\{\delta_N^4\right\}_N$ is uniformly integrable and we get the convergence of $\E(\delta_N^4)$ to 0 when $N \rightarrow +\infty$.

We conclude by the lemma above. Again, the same convergence occurs for $ \frac{1}{\sqrt N} \sum_{i=1}^N ((\widetilde Y_{N,i}^X)^2 - (Y_i^X)^2) $.
\end{proof}

\section*{Conclusion}
We have shown that the Sobol index estimator considered in this paper is asymptotically normal and asymptotically efficient. We also proved that these two properties are robust with respect to the use of a perturbated model, provided that the perturbation has a variance which decays fast enough. The asymptotic normality property can be used to produce approximate confidence intervals for the Sobol indices; we have presented numerical experiments asserting the reliability of these confidence intervals.

\thanks{This work has been partially supported by the French National
Research Agency (ANR) through COSINUS program (project COSTA-BRAVA
nr. ANR-09-COSI-015). }

\bibliographystyle{plain}
\bibliography{biblio}

\def\cprime{$'$}
\begin{thebibliography}{10}

\bibitem{box1987empirical}
G.E.P. Box and N.R. Draper.
\newblock {\em Empirical model-building and response surfaces.}
\newblock John Wiley \& Sons, 1987.

\bibitem{Rlhs}
Rob Carnell.
\newblock {\em lhs: Latin Hypercube Samples}, 2009.
\newblock R package version 0.5.

\bibitem{chen2005analytical}
Wei Chen, Ruichen Jin, and Agus Sudjianto.
\newblock Analytical variance-based global sensitivity analysis in
  simulation-based design under uncertainty.
\newblock {\em TRANSACTIONS-AMERICAN SOCIETY OF MECHANICAL ENGINEERS JOURNAL OF
  MECHANICAL DESIGN}, 127(5):875, 2005.

\bibitem{cukier1978nonlinear}
RI~Cukier, HB~Levine, and KE~Shuler.
\newblock Nonlinear sensitivity analysis of multiparameter model systems.
\newblock {\em Journal of computational physics}, 26(1):1--42, 1978.

\bibitem{da2008efficient}
S.~Da~Veiga and F.~Gamboa.
\newblock Efficient estimation of sensitivity indices.
\newblock {\em Journal of Nonparametric Statistics, in press}, 2012.

\bibitem{Rmlegp}
Garrett~M. Dancik.
\newblock {\em mlegp: Maximum Likelihood Estimates of Gaussian Processes},
  2011.
\newblock R package version 3.1.2.

\bibitem{Rnp2008}
Tristen Hayfield and Jeffrey~S. Racine.
\newblock Nonparametric econometrics: The np package.
\newblock {\em Journal of Statistical Software}, 27(5), 2008.

\bibitem{helton2006survey}
J.C. Helton, J.D. Johnson, C.J. Sallaberry, and C.B. Storlie.
\newblock {Survey of sampling-based methods for uncertainty and sensitivity
  analysis}.
\newblock {\em Reliability Engineering \& System Safety}, 91(10-11):1175--1209,
  2006.

\bibitem{homma1996importance}
T.~Homma and A.~Saltelli.
\newblock {Importance measures in global sensitivity analysis of nonlinear
  models}.
\newblock {\em Reliability Engineering \& System Safety}, 52(1):1--17, 1996.

\bibitem{ibragimov1981statistical}
I.A. Ibragimov and RZ~Has'~Minskii.
\newblock {\em Statistical estimation--asymptotic theory}.
\newblock Springer-Verlag, New York, 1981.
\newblock Applications of Mathematics, Vol. 16.

\bibitem{ishigami1990importance}
T.~Ishigami and T.~Homma.
\newblock An importance quantification technique in uncertainty analysis for
  computer models.
\newblock In {\em First International Symposium on Uncertainty Modeling and
  Analysis Proceedings, 1990.}, pages 398--403. IEEE, 1990.

\bibitem{janon2011certified}
A.~Janon, M.~Nodet, and C.~Prieur.
\newblock {Certified reduced-basis solutions of viscous Burgers equations
  parametrized by initial and boundary values}.
\newblock Preprint available at \url{http://hal.inria.fr/inria-00524727/en},
  2010, \emph{Accepted in Mathematical modelling and Numerical Analysis}.

\bibitem{janon2011confidence}
A.~Janon, M.~Nodet, and C.~Prieur.
\newblock {Uncertainties assessment in global sensitivity indices estimation
  from metamodels}.
\newblock Preprint available at \url{http://hal.inria.fr/inria-00567977}, 2011,
  \emph{Accepted in International Journal for Uncertainty Quantification}.

\bibitem{kahan1965pracniques}
William Kahan.
\newblock Pracniques: further remarks on reducing truncation errors.
\newblock {\em Communications of the ACM}, 8(1):40, 1965.

\bibitem{madych1992bounds}
WR~Madych and SA~Nelson.
\newblock Bounds on multivariate polynomials and exponential error estimates
  for multiquadric interpolation.
\newblock {\em Journal of Approximation Theory}, 70(1):94--114, 1992.

\bibitem{marrel2009calculations}
A.~Marrel, B.~Iooss, B.~Laurent, and O.~Roustant.
\newblock {Calculations of sobol indices for the gaussian process metamodel}.
\newblock {\em Reliability Engineering \& System Safety}, 94(3):742--751, 2009.

\bibitem{Monod2006}
H.~Monod, C.~Naud, and D.~Makowski.
\newblock Uncertainty and sensitivity analysis for crop models.
\newblock In D.~Wallach, D.~Makowski, and J.~W. Jones, editors, {\em Working
  with Dynamic Crop Models: Evaluation, Analysis, Parameterization, and
  Applications}, chapter~4, pages 55--99. Elsevier, 2006.

\bibitem{morariu08figtree}
Vlad~I. Morariu, Balaji~Vasan Srinivasan, Vikas~C. Raykar, Ramani Duraiswami,
  and Larry~S. Davis.
\newblock Automatic online tuning for fast gaussian summation.
\newblock In {\em Advances in Neural Information Processing Systems (NIPS)},
  2008.

\bibitem{nguyen2005certified}
N.C. Nguyen, K.~Veroy, and A.T. Patera.
\newblock {Certified real-time solution of parametrized partial differential
  equations}.
\newblock {\em Handbook of Materials Modeling}, pages 1523--1558, 2005.

\bibitem{R2011}
{R Development Core Team}.
\newblock {\em R: A Language and Environment for Statistical Computing}.
\newblock R Foundation for Statistical Computing, Vienna, Austria, 2011.
\newblock {ISBN} 3-900051-07-0.

\bibitem{racine1993efficient}
J.~Racine.
\newblock An efficient cross-validation algorithm for window width selection
  for nonparametric kernel regression.
\newblock {\em Communications in Statistics Simulation and Computation},
  22:1107--1107, 1993.

\bibitem{saltelli-sensitivity}
A.~Saltelli, K.~Chan, and E.M. Scott.
\newblock {\em Sensitivity analysis}.
\newblock Wiley Series in Probability and Statistics. John Wiley \& Sons, Ltd.,
  Chichester, 2000.

\bibitem{saltelli2004practice}
A.~Saltelli, S.~Tarantola, Campolongo F., and Ratto M.
\newblock {Sensitivity analysis in practice: a guide to assessing scientific
  models}, 2004.

\bibitem{sant:will:notz:2003}
T.~J. Santner, B.~Williams, and W.~Notz.
\newblock {\em The Design and Analysis of Computer Experiments}.
\newblock Springer-Verlag, 2003.

\bibitem{schaback2003mathematical}
R.~Schaback.
\newblock Mathematical results concerning kernel techniques.
\newblock In {\em Prep. 13th IFAC Symposium on System Identification,
  Rotterdam}, pages 1814--1819. Citeseer, 2003.

\bibitem{schaback2011interp}
M.~Scheuerer, R.~Schaback, and M.~Schlather.
\newblock Interpolation of spatial data -- a stochastic or a deterministic
  problem ?
\newblock {\em Preprint, Universit{\"a}t G{\"o}ttingen.
  \url{http://num.math.uni-goettingen.de/schaback/research/papers/IoSD.pdf}},
  2011.

\bibitem{sobol1993}
I.~M. Sobol.
\newblock Sensitivity estimates for nonlinear mathematical models.
\newblock {\em Math. Modeling Comput. Experiment}, 1(4):407--414 (1995), 1993.

\bibitem{sobol2001global}
I.M. Sobol.
\newblock {Global sensitivity indices for nonlinear mathematical models and
  their Monte Carlo estimates}.
\newblock {\em Mathematics and Computers in Simulation}, 55(1-3):271--280,
  2001.

\bibitem{storlie2009implementation}
C.B. Storlie, L.P. Swiler, J.C. Helton, and C.J. Sallaberry.
\newblock {Implementation and evaluation of nonparametric regression procedures
  for sensitivity analysis of computationally demanding models}.
\newblock {\em Reliability Engineering \& System Safety}, 94(11):1735--1763,
  2009.

\bibitem{sudret2008global}
B.~Sudret.
\newblock {Global sensitivity analysis using polynomial chaos expansions}.
\newblock {\em Reliability Engineering \& System Safety}, 93(7):964--979, 2008.

\bibitem{tissot2010bias}
J.Y. Tissot and C.~Prieur.
\newblock A bias correction method for the estimation of sensitivity indices
  based on random balance designs.
\newblock {\em Reliability engineering and systems safety}, 2010.

\bibitem{van2000asymptotic}
A.~W. van~der Vaart.
\newblock {\em Asymptotic statistics}, volume~3 of {\em Cambridge Series in
  Statistical and Probabilistic Mathematics}.
\newblock Cambridge University Press, Cambridge, 1998.

\end{thebibliography}

\end{document}